\crefname{hypothesis}{Hypothesis}{Hypotheses}
\title{Least-Squares Neural Network (LSNN) Method \\ for Scalar Nonlinear Hyperbolic Conservation Laws:\\ [1mm]
Discrete Divergence Operator\thanks{This work was supported in part by the National Science Foundation
under grant DMS-2110571.}}
\author{Zhiqiang Cai\thanks{Department of Mathematics, Purdue University, 150 N. University Street, West Lafayette, IN 47907-2067 
  (\email{caiz@purdue.edu}, \email{chen2042@purdue.edu}).}
\and Jingshuang Chen\footnotemark[2]
\and Min Liu\thanks{School of Mechanical Engineering, Purdue University, 585 Purdue Mall,
West Lafayette, IN 47907-2088(\email{liu66@purdue.edu}). }}
\Crefname{ALC@unique}{Line}{Lines}
\newcommand{\R}{\mathbb{R}}
\newcommand{\vertiii}[1]{{\left\vert\kern-0.25ex\left\vert\kern-0.25ex\left\vert #1 
    \right\vert\kern-0.25ex\right\vert\kern-0.25ex\right\vert}}
\renewcommand{\div}{\nabla \cdot}
\newcommand{\bsigma}{\mbox{\boldmath${\sigma}$}}
\newcommand{\btheta}{\mbox{\boldmath${\theta}$}}
\newcommand{\bomega}{\mbox{\boldmath$\omega$}}
\newcommand{\btau}{\mbox{\boldmath${\tau}$}}
\newenvironment{myproof}[2] {\paragraph{Proof of {#1} {#2}}}{\hfill$\square$}
\newcommand{\jump}[1]{[\![ #1]\!]}
\setlist[itemize]{left=16pt} 
\def\bb{{\bf b}}
\def\bff{{\bf f}}
\def\bm{{\bf m}}
\def\bn{{\bf n}}
\def\bx{{\bf x}}
\def\bzz{{\bf z}}
\def\cL{{\cal L}}
\def\cM{{\cal M}}
\def\cN{{\cal N}}
\def\cT{{\cal T}}
\def\cV{{\cal V}}
\def\cK{{\cal K}}
\def\div{{\mbox{\bf div\,}}}
\def\divt{${\mbox{\bf div}}_{_\cT}$\,}
\begin{document}

\maketitle

\begin{abstract}
A least-squares neural network (LSNN) method was introduced for solving scalar linear and nonlinear hyperbolic conservation laws (HCLs) in \cite{Cai2021linear, Cai2021nonlinear}.
This method is based on an equivalent least-squares (LS) formulation and uses ReLU neural network as approximating functions, making it ideal for approximating discontinuous functions with unknown interface location. In the design of the LSNN method for HCLs, the numerical approximation of differential operators is a critical factor, and standard numerical or automatic differentiation along coordinate directions can often lead to a failed NN-based method. To overcome this challenge, this paper rewrites HCLs in their divergence form of space and time and introduces a new discrete divergence operator. As a result, the proposed LSNN method is free of penalization of artificial viscosity. 

Theoretically, the accuracy of the discrete divergence operator is estimated even for discontinuous solutions. Numerically, the LSNN method with the new discrete divergence operator was tested for several benchmark problems with both convex and non-convex fluxes, and was able to compute the correct physical solution for problems with rarefaction, shock or compound waves. The method is capable of capturing the shock of the underlying problem without oscillation or smearing, even without any penalization of the entropy condition, total variation, and/or artificial viscosity. 
\end{abstract}

\begin{keywords}
 discrete divergence operator, least-squares method, ReLU neural network, scalar nonlinear hyperbolic conservation law
\end{keywords}

\begin{AMS}
 
\end{AMS}

\section{Introduction}\label{sec1}

Numerically approximating solutions of nonlinear hyperbolic conservation laws (HCLs) is a computationally challenging task. This is partly due to the discontinuous nature of HCL solutions at unknown locations, which makes approximation using fixed, quasi-uniform meshes very difficult. Over the past five decades, many advanced numerical methods have been developed to address this issue, including higher order finite volume/difference methods using limiters, filters, ENO/WENO, etc.(e.g.,  \cite{roe1981approximate, shu1988efficient, shu1998essentially,gottlieb1997gibbs,hesthaven2017numerical,hesthaven2007nodal, leveque1992numerical}) and discontinuous and/or adaptive finite element methods (e.g., \cite{CockburnShu1989, brezzi2004discontinuous,dahmen2012adaptive, demkowicz2010class, burman2009posteriori, houston1999posteriori, houston2000posteriori}). 

Neural networks (NNs) as a new class of approximating functions have been used recently for solving partial differential equations (see, e.g., \cite{cai2020deep,raissi2019physics,Sirignano18}) due to their versatile expressive power. One of the unique features of NNs is their ability to generate moving meshes implicitly by neurons that can automatically adapt to the target function and the solution of a PDE, which helps overcome the limitations of traditional approximation methods that use fixed meshes. For example, a ReLU NN generates continuous piece-wise linear functions with irregular and free/moving meshes. This property of ReLU NNs was used in \cite{Cai2021linear} for solving linear advection-reaction problem with discontinuous solution, without requiring information about the location of discontinuous interfaces. Specifically, the least-squares NN method studied in \cite{Cai2021linear} is based on the least-squares formulation in (\cite{bochev2001improved, de2004least}), and it uses ReLU NNs as the approximating functions while approximating the differential operator by directional numerical differentiation. Compared to various adaptive mesh refinement (AMR) methods that locate discontinuous interfaces through an adaptive mesh refinement process, the LSNN method is significant more efficient in terms of the number of degrees of freedom (DoF) used.


Solutions to nonlinear hyperbolic conservation laws are often discontinuous due to shock formation. It is well-known that the differential form of a HCL is not valid at shock waves, where the solution is discontinuous. As a result, the directional numerical differentiation of the differential operator based on the differential form used in \cite{Cai2021linear} cannot be applied to nonlinear HCLs. To overcome this challenge, the integral form of HCLs (as seen in \cite{leveque1992numerical}) must be used, which is valid for problems with discontinuous solutions, particularly at the discontinuous interfaces. This is why the integral form forms the basis of many conservative methods such as Roe's scheme \cite{harten1987uniformly}, WENO \cite{shu1998essentially, shu1988efficient}, etc.


Approximating the divergence operator by making use of the Roe and ENO fluxes, in \cite{Cai2021nonlinear} we tested the resulting LSNN method for scalar nonlinear HCLs. Numerical results for the inviscid Burgers equation showed that the LSNN method with conservative numerical differentiation is capable of capturing the shock without smearing and oscillation. Additionally, the LSNN method has fewer DoF than traditional mesh-based methods. Despite the promising results in \cite{Cai2021nonlinear}, limitations were observed with the LSNN method when using conservative numerical differentiation of the Roe and second-order ENO fluxes. For example, the resulting LSNN method is not accurate for complicated initial condition, and has problems with rarefaction waves and non-convex spatial fluxes. To improve accuracy, using ``higher order'' conservative methods such as ENO or WENO could be considered. However, these conservative schemes are designed for traditional mesh-based methods and the ``higher order'' here is measured at where solutions are smooth. 

In this paper, a new discrete divergence operator is proposed to accurately approximate the divergence of a vector filed even in the presence of discontinuities. This operator is defined based on its physical meaning: the rate of net outward flux per unit volume,  
and is approximated through surface integrals by the {\it composite} mid-point/trapezoidal numerical integration. Theoretically, the accuracy of the discrete divergence operator can be improved by increasing the number of surface integration points (as shown in Lemma~\ref{3.3} and Remark~\ref{r3.4}). The LSNN method, being a ``mesh/point-free'' space-time method, allows the use of all points on the boundary surfaces of a control volume for numerical integration.

Theoretically, we show that the residual of the LSNN approximation using the newly developed discrete divergence operator is bounded by the best approximation of the class of NN functions in some measure as stated in Lemma~\ref{4.1} plus the approximation error from numerical integration and differentiation (Lemma~\ref{4.3}). Numerically, our results show that the LSNN method with the new discrete divergence operator can accurately solve the inviscid Burgers equation with various initial conditions, compute the viscosity vanishing solution, capture shock without oscillation or smearing, and is much more accurate than the LSNN method in \cite{Cai2021nonlinear}. Note that the LSNN method does not use flux limiters. Moreover, the LSNN method using new discrete divergence operator works well for problems with non-convex flux and accurately simulates compound waves. 

Recently, several NN-based numerical methods have been introduced for solving scalar nonlinear hyperbolic conservation laws by various researchers (\cite{PNAS2019, cai21, Cai2021nonlinear, Cai2021linear, Fuks20, raissi2019physics, Patel22}). Those methods can be categorized as the physics informed neural network (PINN) \cite{PNAS2019, Fuks20, raissi2019physics, Patel22} and the least-squares neural network (LSNN) \cite{cai21, Cai2021nonlinear, Cai2021linear, cai2020deep} methods. First, both methods are based on the least-squares principle, but the PINN uses the discrete $l^2$ norm and the LSNN uses the continuous Sobolev norm depending on the underlying problem. Second, the differential operator of the underlying problem is approximated by either automatic differentiation or standard finite difference quotient for the PINN and by specially designed discrete differential operator for the LSNN. For example, the LSNN uses discrete directional differential operator in \cite{Cai2021linear} for linear advection-reaction problems, and various traditional conservative schemes in \cite{Cai2021nonlinear} or discrete divergence operator in this paper (see \cite{cai21} for its first version) for nonlinear scalar hyperbolic conservation laws. 

The original PINN has limitations that have been addressed in several studies (see, e.g., \cite{Fuks20, Patel22}). For nonlinear scalar hyperbolic conservation laws,  \cite{Fuks20} found that the PINN fails to provide reasonable approximate solution of the PDE and modified the loss function by penalizing the artificial viscosity term. \cite{Patel22} applied the discrete $l^2$ norm to the boundary integral equations over control volumes instead of the differential equations over points and modified the loss function by penalizing the entropy, total variation, and/or artificial viscosity. Even though the least-squares principle permits freedom of various penalizations, choosing proper penalization constants can be challenging in practice and it affects the accuracy,  efficiency, and stability of the method. In contrast, the LSNN does not require any penalization constants. 



The paper is organized as follows. Section~\ref{sec2} describes the hyperbolic conservation law, its least-squares formulation, and preliminaries. The space-time LSNN method and its block version are presented in Sections~\ref{sec3}. The discrete divergence operator and its error bound is introduced and analyzed in Section~\ref{sec4}. Finally, numerical results for various benchmark test problems are given in Section~\ref{sec5}.


\section{Problem Formulation}\label{sec2}
Let $\tilde{\Omega}$ be a bounded domain in ${\R}^d$ ($d=1, \,2$, or $3$) with Lipschitz boundary, and $I=(0, T)$ be the temporal interval.
Consider the scalar nonlinear hyperbolic conservation law 
\begin{equation} \label{pde}
    \left\{\begin{array}{rcll}
    u_t(\bx,t) + \nabla_{\bx} \!\cdot \tilde{\bff} (u) &= & 0, &\text{ in }\,\, \tilde{\Omega}  \times I, \\[2mm]
    u&=&
    \tilde{g}, &\text{ on }\,\, \tilde{\Gamma}_{-} ,\\[2mm]
    u(\bx,0) &=& u_0(\bx), &\text{ in }\,\, \tilde{\Omega},
    \end{array}\right.
\end{equation}
where $u_t$ is the partial derivative of $u$ with respect to the temporal variable $t$; $\nabla_{\bx}\cdot$ is a divergence operator with respect to the spatial variable $\bx$; $\tilde{\bff}(u)=(f_1(u),...,f_d(u))$ is the spatial flux vector field; $\tilde{\Gamma}_-$ is the part of the boundary $\partial \tilde{\Omega} \times I$ where the characteristic curves enter the domain $\tilde{\Omega}  \times I$; and the boundary data $\tilde{g}$ and the initial data $u_0$ are given scalar-valued functions. Without loss of generality, assume that $f_i(u)$ is twice differentiable for $i=1,\cdots, d$.


Problem (\ref{pde}) is a hyperbolic partial differential equation defined on a space-time domain $\Omega =\tilde{\Omega}  \times I$ in $\R^{d+1}$. Denote the inflow boundary of the domain $\Omega$ and the inflow boundary condition by 
 \[
 \Gamma_-=\left\{\begin{array}{ll}
  \tilde{\Gamma}_-, & t\in (0,T),\\[2mm]
  \Omega, & t=0
  \end{array}\right.
  \quad\mbox{and}\quad
  g=\left\{\begin{array}{ll}
  \tilde{g}, & \mbox{on }\tilde{\Gamma}_-, \\[2mm]
  u_0(\bx), & \mbox{on }\Omega,
  \end{array}\right.
 \]
respectively. Then (\ref{pde}) may be rewritten as the following compact form
\begin{equation} \label{pde1}
    \left\{\begin{array}{rccl}
    \div {\bff} (u) &= & 0, &\text{ in }\,\, {\Omega}\in \R^{d+1}, \\[2mm]
    u&=&
    {g},&\text{ on }\,\, {\Gamma}_{-} ,
    \end{array}\right.
\end{equation}
where $\mbox{\bf div}= (\partial_{x_1},\cdots,\partial_{x_d}, \partial_t)$ is a divergence operator with respect to both spatial and temporal variables $\bzz=(\bx,t)$, and $\bff (u) = (f_1(u),...,f_d(u), u)= (\tilde{\bff}(u),u)$ is the spatial and temporal flux vector field. 
Assume that $u\in L^\infty(\Omega)$. Then $u$ is called a weak solution of (\ref{pde1}) if and only if
\begin{equation}\label{weak}
-(\bff (u), \nabla \varphi)_{0,\Omega} + (\bn\cdot \bff (u),\varphi)_{0, \Gamma_-} = 0,\quad\forall\,\, \varphi \in C^1_{\Gamma_+}(\bar{\omega}),
\end{equation}
where $\Gamma_+=\partial\Omega\setminus\Gamma_-$ is the outflow boundary and $C^1_{\Gamma_+}(\bar{\omega})=\{\varphi\in C^1(\bar{\omega}) :\, \varphi=0 \mbox{ on } {\Gamma_+}\}$.

Denote the collection of square integrable vector fields whose divergence is also square integrable by
\[
H(\mbox{div};\Omega)= \left\{\btau\in L^2(\Omega)^{d+1} |\, \div\btau \in L^2(\Omega)\right\}.
\]
It is then easy to see that solutions of (\ref{pde1}) are in the following subset of $L^2(\Omega)$ 
 \begin{equation}\label{space}
     \cV_{\bff}=\left\{v\in L^2(\Omega) |\, \bff(v)\in H(\mbox{div};\Omega)\right\}.
 \end{equation}
Define the least-squares (LS) functional
 \begin{equation}\label{ls}
    \mathcal{L}(v;{ g}) = \| \div \bff(v)\|_{0,\Omega}^2 +  \|v-g\|_{0, \Gamma_-}^2 ,
\end{equation}
where $\|\cdot\|_{0,S}$ denotes the standard $L^2(S)$ norm for $S=\Omega$ and $\Gamma_-$.
Now, the corresponding least-squares formulation is to seek $u\in V_{\bff}$ such that
\begin{equation}\label{minimization1}
    \mathcal{L}(u;{g}) = \min_{\small v\in V_{\bff}} \mathcal{L}(v;{g}).
\end{equation}

\begin{proposition}
Assume that $u\in L^\infty(\Omega)$ is a piece-wise $C^1$ function. Then $u$ is a weak solution of {\em (\ref{pde1})} if and only if $u$ is a solution of the minimization problem in {\em (\ref{minimization1})}.
\end{proposition}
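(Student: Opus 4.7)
The plan is to reduce both directions of the equivalence to the single characterisation $\mathcal{L}(u;g)=0$. Since the LS functional is a sum of two squared $L^2$ norms, it is nonnegative, and $\mathcal{L}(v;g)=0$ holds if and only if $\div \bff(v)=0$ a.e.\ in $\Omega$ and $v=g$ on $\Gamma_-$. Hence, if I can show that a piecewise $C^1$ function $u\in L^\infty(\Omega)$ is a weak solution of (\ref{pde1}) exactly when $\mathcal{L}(u;g)=0$, the proposition follows: weak solutions realise the value $0$, which is the global lower bound of $\mathcal{L}$, so they minimise; and conversely, once such a minimising competitor is exhibited the infimum must be $0$, forcing any other minimiser to vanish the functional and hence be a weak solution.

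For the forward direction I take a finite partition $\{\Omega_k\}$ of $\Omega$ on whose interiors $u$ is $C^1$, and let $\Sigma$ denote the union of their internal interfaces. Applying the Gauss--Green formula on each $\Omega_k$ to $\bff(u)\varphi$ with $\varphi\in C^1_{\Gamma_+}(\bar\Omega)$ and summing over $k$ will yield
\[
-(\bff(u),\nabla\varphi)_{0,\Omega}
=\sum_k(\div \bff(u),\varphi)_{0,\Omega_k}
+(\jump{\bn\cdot\bff(u)},\varphi)_{0,\Sigma}
-(\bn\cdot\bff(u),\varphi)_{0,\partial\Omega}.
\]
Substituting this identity into (\ref{weak}) and using $\varphi=0$ on $\Gamma_+$, I would test successively against $\varphi$ supported inside a single $\Omega_k$ (to obtain the classical identity $\div\bff(u)=0$ on every piece), against $\varphi$ concentrated near a piece of $\Sigma$ (to extract the Rankine--Hugoniot condition $\jump{\bn\cdot\bff(u)}=0$), and against $\varphi$ localised near $\Gamma_-$ (to pick up $u|_{\Gamma_-}=g$). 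The first two conclusions glue the piecewise classical divergences into a single $L^2$ divergence of $\bff(u)$ equal to zero, placing $u$ in $\cV_{\bff}$, and together with $u=g$ on $\Gamma_-$ this gives $\mathcal{L}(u;g)=0$.

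For the reverse direction, Step~2 has produced a competitor with $\mathcal{L}=0$, so the infimum is $0$ and any minimiser $u\in \cV_{\bff}$ must satisfy $\div\bff(u)=0$ in $L^2(\Omega)$ and $u=g$ in $L^2(\Gamma_-)$. The standard $H(\mbox{div};\Omega)$ Green's formula then gives
\[
-(\bff(u),\nabla\varphi)_{0,\Omega}=(\div\bff(u),\varphi)_{0,\Omega}-(\bn\cdot\bff(u),\varphi)_{0,\partial\Omega},
\]
and combining $\div\bff(u)=0$, $\varphi=0$ on $\Gamma_+$, and $u=g$ on $\Gamma_-$ recovers (\ref{weak}) verbatim.

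The step I expect to be the main obstacle is the Rankine--Hugoniot extraction in the forward direction. Without the jump condition $\jump{\bn\cdot\bff(u)}=0$ on $\Sigma$, the distributional divergence of $\bff(u)$ carries singular measures supported on the interior discontinuity surface, so $\bff(u)\notin H(\mbox{div};\Omega)$ and $\mathcal{L}(u;g)$ is not even defined. The piecewise $C^1$ hypothesis is precisely what lets me localise a test function $\varphi$ near $\Sigma$ and isolate the jump contribution from the bulk and boundary terms in the piecewise Green identity above; a density argument extending the class of admissible $\varphi$ beyond $C^1_{\Gamma_+}(\bar\Omega)$ may be needed to make the localisation rigorous.
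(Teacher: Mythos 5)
The paper does not actually prove this proposition; it defers entirely to Theorem 2.5 of the cited reference, and your outline is essentially the standard argument behind that theorem (characterize weak solutions as the zeros of $\mathcal{L}(\cdot;g)$ via piecewise Gauss--Green, then identify zeros with minimizers). The skeleton is right, but two steps do not go through as written. The first is the extraction of $u=g$ on $\Gamma_-$. In (\ref{weak}) as printed the inflow term is $(\bn\cdot\bff(u),\varphi)_{0,\Gamma_-}$, built from the unknown $u$ rather than from the data, so after your piecewise Green identity it cancels exactly against the $\Gamma_-$ portion of the boundary term $-(\bn\cdot\bff(u),\varphi)_{0,\partial\Omega}$, and localizing $\varphi$ near $\Gamma_-$ yields no relation involving $g$ at all. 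For the argument to work the data must already appear in the weak form (as it does in the cited reference), and even then localization only gives $\bn\cdot\bigl(\bff(u)-\bff(g)\bigr)=0$ on $\Gamma_-$; concluding $u=g$ requires the strict inflow property of $\Gamma_-$, i.e., monotonicity of $w\mapsto\bn\cdot\bff(w)$ there, to invert the flux. As your proof stands, a function satisfying (\ref{weak}) verbatim still has $\mathcal{L}(u;g)=\|u-g\|_{0,\Gamma_-}^2$, which is not forced to vanish.

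The second issue is that your converse direction is circular as phrased: you appeal to ``the minimising competitor exhibited in Step 2'' to conclude that the infimum of $\mathcal{L}$ over $\cV_{\bff}$ is zero, but in the converse you are given only that $u$ is a minimizer; the competitor from Step 2 exists only if some piecewise $C^1$ weak solution is already known to exist. Without that existence hypothesis (which the cited theorem carries implicitly), a minimizer of (\ref{minimization1}) could have $\mathcal{L}(u;g)>0$ and fail to be a weak solution. You should state explicitly that the real equivalence is ``weak solution $\Leftrightarrow$ $\mathcal{L}(u;g)=0$,'' and that passing from ``minimizer'' to ``$\mathcal{L}=0$'' uses the existence of at least one admissible zero of the functional. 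The remaining ingredients --- interior localization giving $\div \bff(u)=0$ on each smooth piece, the Rankine--Hugoniot condition $\jump{\bn\cdot\bff(u)}=0$ on $\Sigma$ gluing the piecewise divergences into $\bff(u)\in H(\mbox{div};\Omega)$ with vanishing $L^2$ divergence, and the $H(\mbox{div};\Omega)$ Green formula in the reverse direction --- are correct and are exactly what the proof requires.
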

\begin{proof}
The proposition is a direct consequence of Theorem 2.5 in \cite{de2005numerical}.
\end{proof}






\section{Least-Squares Neural Network Method}\label{sec3}

Based on the least-squares formulation in (\ref{minimization1}), in this section we first describe the least-squares neural network (LSNN) method for the scalar nonlinear hyperbolic conservation law and then estimate upper bound of the LSNN approximation. 

To this end, denote a scalar-valued function generated by a $l$-layer fully connected neural network by
\begin{equation}\label{DNN1}
 \cN(\bzz)=\bomega^{(l)} \left(N^{(l-1)} \circ \cdots \circ N^{(2)}\circ N^{(1)}(\bzz)\right)- b^{(l)}:\, \bzz=(\bx,t)\in\R^{d+1}
\longrightarrow \R,
\end{equation}
where $\bomega^{(l)}\in \R^{n_{l-1}}$, $b^{(l)}\in \R$, and the symbol $\circ$ denotes the composition of functions. For $k=1,\,\cdots,\,l-1$, the $N^{(k)}: \R^{n_{k-1}} \rightarrow \R^{n_{k}}$ is called
the $k^{th}$ hidden layer of the network defined as follows:
\begin{equation}\label{layerdef}
  N^{(k)}(\bzz^{(k-1)})= \tau (\bomega^{(k)}\bzz^{(k-1)}-\bb^{(k)})
  \quad\mbox{for } \bzz^{(k-1)}\in \R^{n_{k-1}},
\end{equation}
where $\bomega^{(k)} 
\in \R^{n_{k}\times n_{k-1}}$, $\bb^{(k)}\in \R^{n_{k}}$, $\bzz^{(0)}=\bzz$, and $\tau(s)$ is the activation function whose application 
to a vector is defined component-wisely. In this paper, we will use the rectified linear unit (ReLU) activation function given by
\begin{equation}\label{tau-k}
 \tau(s) = \max\{0,\,s\}
 =\left\{\begin{array}{rclll}
 0, & \mbox{if }  s\leq 0,\\[2mm]
 s, & \mbox{if } s >0.
 \end{array}\right.
 \end{equation}
As shown in \cite{Cai2021linear}, the ReLU is a desired activation function for approximating discontinuous solution.
 
Denote the set of neural network functions by
\[
\cM_N=\cM_N(l)=\big\{\cN(\bzz) \mbox{ defined in (\ref{DNN1}) } :\,  \bomega^{(k)} 
\in \R^{n_{k}\times n_{k-1}},\,\, \bb^{(k)}\in \R^{n_{k}} \mbox{ for } k=1,\cdots,l
\big\},
\]
where the subscript $N$ denotes the total number of parameters ${\small\btheta}=\left\{\bomega^{(k)}, \bb^{(k)}\right\}$ given by
\[
N=M_d(l) =\sum^l_{k=1} n_{k}\times (n_{k-1}+1).
\]
Obviously, the continuity of the activation function $\tau(s)$ implies that $\cM_N$ is a subset of $C^0(\Omega)$. Together with the smoothness assumption on spatial flux $\tilde{\bff}(u)$, it is easy to see that $\cM_N$ is also a subset of $\cV_{\bff}$ defined in (\ref{space}). 

Since $\cM_N$ is not a linear subspace, it is then natural to discretize the HCL using a least-squares minimization formulation. Before defining the computationally feasible least-squares neural network (LSNN) method, let us first consider an intermediate least-squares neural network approximation: finding $u^{_N}(\bzz;{\small\btheta}^*) \in \cM_N$ such that 
\begin{equation}\label{L-NN}
     \mathcal{L}\big(u^{_N}(\cdot;{\small\btheta}^*);g\big)
     = \min\limits_{v\in \cM_N} \mathcal{L}\big(v(\cdot;{\small\btheta});{g}\big)
     = \min_{{\scriptsize \btheta}\in\R^{N}}\mathcal{L}\big(v(\cdot;{\small\btheta});{g}\big).
\end{equation}



\begin{lemma}\label{4.1}
Let $u$ be the solution of {\em (\ref{pde1})}, and let $u^{_N}\in \cM_N$ be a solution of {\em (\ref{L-NN})}. Assume that $\bff$ is twice differentiable, then there exists a positive constant $C$ such that 
\begin{equation}\label{L-residual}
    \begin{split}
        \mathcal{L}\big(u^{_N};g\big)
&=\inf_{v\in \cM_N} 
\left( \|v-u\|_{0, \Gamma_-}^2 + \big\| \div \left[\bff(v)- \bff(u)\right]\big\|_{0,\Omega}^2\right)\\[2mm]
&\leq  C\inf_{v\in \cM_N} 
\left( \|v-u\|_{0, \Gamma_-}^2 + \big\| \div \left[\bff^\prime(u) (v-u)\right]\big\|_{0,\Omega}^2\right) + \mbox{h.o.t.},
    \end{split}
\end{equation}
where h.o.t. means a higher order term comparing to the first term.
\end{lemma}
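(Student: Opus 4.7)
The plan is to exploit the two facts that (i) $u$ is a solution of \eqref{pde1} (so $\div\bff(u)=0$ in $\Omega$ and $u=g$ on $\Gamma_-$) and (ii) $u^{_N}$ is a minimizer of the least-squares functional over $\mathcal{M}_N$, and then separate the nonlinearity from the linearization via a Taylor expansion of $\bff$ about $u$.

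First, I would substitute the two identities from (i) into the definition \eqref{ls} of $\mathcal{L}$ to obtain, for every admissible $v$,
\[
\mathcal{L}(v;g) = \|v-g\|_{0,\Gamma_-}^2 + \|\div\bff(v)\|_{0,\Omega}^2
              = \|v-u\|_{0,\Gamma_-}^2 + \|\div[\bff(v)-\bff(u)]\|_{0,\Omega}^2.
\]
Since $u^{_N}\in\mathcal{M}_N$ minimizes $\mathcal{L}(\cdot;g)$ over $\mathcal{M}_N$, taking the infimum of the right-hand side over $\mathcal{M}_N$ yields the first equality in \eqref{L-residual}.

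Second, for the inequality, I would apply the second-order Taylor formula componentwise to $\bff$: because $\bff$ is twice differentiable, for each $i$ there exists $\eta_i$ between $u$ and $v$ with
\[
f_i(v)-f_i(u) = f_i^\prime(u)(v-u) + \tfrac{1}{2} f_i^{\prime\prime}(\eta_i)(v-u)^2.
\]
Writing $\bff(v)-\bff(u) = \bff^\prime(u)(v-u) + R(u,v)$, where $R$ is quadratic in $v-u$, and applying $(a+b)^2\le 2a^2+2b^2$ under the divergence,
\[
\|\div[\bff(v)-\bff(u)]\|_{0,\Omega}^2 \le 2\|\div[\bff^\prime(u)(v-u)]\|_{0,\Omega}^2 + 2\|\div R(u,v)\|_{0,\Omega}^2,
\]
and combining with the identical boundary term on $\Gamma_-$ gives the desired bound with $C=2$; the remainder term involving $R$, being quadratic in $v-u$, plays the role of h.o.t.

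The main obstacle is making precise in what sense the h.o.t.\ is genuinely of higher order. Solutions of HCLs may be discontinuous, so $\div\bff(u)$ must be read in the weak/$H(\mbox{div})$ sense of \eqref{space}, and the chain-rule manipulation $\bff^\prime(u)(v-u)$ together with the quadratic remainder $R$ needs to be justified on each smooth piece and across interfaces. The cleanest route is to exploit the uniform $L^\infty$ bound on $u$ (assumed) and on $v\in\mathcal{M}_N$ (ReLU networks that approximate $u$ can be taken bounded), so that $\|R(u,v)\|_{L^\infty}\lesssim \|v-u\|_{L^\infty}\cdot|v-u|$, from which one reads off that the associated divergence norm is of higher order in $\|v-u\|$ than the linearized term. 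This $L^\infty$ smallness of the best NN approximation, used to absorb the remainder, is the subtle step; everything else is bookkeeping around the Taylor expansion and the inequality $(a+b)^2\le 2a^2+2b^2$.
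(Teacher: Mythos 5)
Your proposal is correct and follows essentially the same route as the paper's proof: rewrite $\mathcal{L}(v;g)$ using $\div\bff(u)=0$ and $u=g$ on $\Gamma_-$ to get the equality, then Taylor-expand $\bff(v)-\bff(u)=\bff^\prime(u)(v-u)+\tfrac12\bff^{\prime\prime}(w)(v-u)^2$ and split off the quadratic remainder as the h.o.t. (the paper uses the triangle inequality on the norms where you use $(a+b)^2\le 2a^2+2b^2$, an immaterial difference in the constant). Your closing remarks on justifying the "higher order" status of the remainder go slightly beyond the paper, which simply asserts it.
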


\begin{proof}
For any $v\in \cM_N$, (\ref{L-NN}) and (\ref{pde1}) imply that
\[
\mathcal{L}\big(u^{_N};g\big)\leq \mathcal{L}\big(v;g\big) =\|v-u\|_{0, \Gamma_-}^2+\big\| \div \left[\bff(v)-\bff(u)\right]\big\|_{0,\Omega}^2,
\]
which proves the validity of the equality in (\ref{L-residual}). 
By the Taylor expansion, there exists $\{w_i\}_{i=1}^d$ between $u$ and $v$ such that 
\[
\bff(v)-\bff(u)=\bff^\prime(u) (v-u) +\dfrac12 \bff^{\prime\prime}(w) (v-u)^2,
\]
where $\bff^\prime(u)=(f^{\prime}_1(u),\cdots,f^{\prime}_d(u), 1)^t$ and $\bff^{\prime\prime}(w)=(f^{\prime\prime}_1(w_1),\cdots,f^{\prime\prime}_d(w_d), 0)^t$.
Together with the triangle inequality we have
\begin{equation}\label{4.12}
\big\| \div \left[\bff(v)-\bff(u)\right]\big\|_{0,\Omega} \leq \big\| \div \left[\bff^\prime(u) (v-u)\right]\big\|_{0,\Omega} +  \dfrac12\,\left\| \div \left[\bff^{\prime\prime}(w) (v-u)^2\right]\right\|_{0,\Omega}.
\end{equation}
Notice that the second term in the right-hand side of (\ref{4.12}) is a higher order term comparing to the first term. Now, the inequality in (\ref{L-residual}) is a direct consequence of the equality in (\ref{L-residual}) and (\ref{4.12}).
This completes the proof of the lemma.
\end{proof}

\begin{remark}
When $u$ is sufficiently smooth, the second term 
\[
\div \left[\bff^\prime(u) (v-u)\right] = (v-u)\, \div \bff^\prime(u) + \bff^\prime(u) \!\cdot \!\nabla (v-u)
\]
may be bounded by the sum of the $L^2$ norms of $v-u$ and the directional derivative of $v-u$ along the direction $\bff^\prime(u)$.
\end{remark}

Evaluation of the least-squares functional $\mathcal{L}\big(v;\,g\big)$ defined in (\ref{ls}) requires integration and differentiation over the computational domain and the inflow boundary. As in 
\cite{cai2020deep}, we evaluate the integral of the least-squares functional by numerical integration. 
To do so, let 
\[
{\cal T}=\{K :\, K\mbox{ is an open subdomain of } \Omega \} \quad\mbox{and}\quad
{\cal E}_{-}=\{E
=\partial K \cap \Gamma_-:\,\, K\in\mathcal{T}\}
\] 
be partitions of the domain $\Omega$ and the inflow boundary $\Gamma_-$, respectively. For each $K\in {\cal T}$ and $E\in {\cal E}_-$, let ${\cal Q}_K$ and ${\cal Q}_E$ be Newton-Cotes quadrature of integrals over $K$ and $E$, respectively. 
The corresponding discrete least-squares functional is defined by
\begin{equation}\label{L-NN-d}
\begin{split}
     \mathcal{L}_{_{\small {\cal T}}}\big(v;{g}\big) 
     = \sum_{K \in {\cal T}} {\cal Q}_K^2\big(\div_{\!\!_\cT} \bff(v) \big)+  \sum\limits_{E\in {\cal E}_-} {\cal Q}_E^2\big(v-g\big),
\end{split}
\end{equation}
where 
\divt denotes a discrete divergence operator. The discrete divergence operators of the Roe and ENO type were studied in \cite{Cai2021nonlinear}. In the subsequent section, we will introduce new discrete divergence operators tailor to the LSNN method that are accurate approximations to the divergence operator when applying to discontinuous solution. 

With the discrete least-squares functional $\mathcal{L}_{_{\small {\cal T}}}\big(v;\,{g}\big)$,
the least-squares neural network (LSNN) method is to find ${u}^{_N}_{_{\small {\cal T}}}(\bzz,{\small\btheta}^*)\in \cM_N$ such that
 \begin{equation}\label{discrete_minimization_functional}
  \mathcal{L}_{_{\small {\cal T}}} \big({u}^{_N}_{_{\small {\cal T}}}(\cdot,{\small\btheta}^*);{g}\big) 
  = \min\limits_{v\in \cM_N} \mathcal{L}_{_{\small {\cal T}}}\big(v(\cdot;{\small\btheta});\,{g}\big)
 = \min_{{\scriptsize \btheta}\in\R^{N}}\mathcal{L}_{_{\small {\cal T}}} \big(v(\cdot; {\small\btheta});{g}\big).
\end{equation}


\begin{lemma}\label{4.3}
Let $u$, $u^{_N}$, and $u_{_\cT}^{_N}$ be the solutions of problems {\em (\ref{ls})}, {\em (\ref{L-NN})}, and {\em (\ref{discrete_minimization_functional})}, respectively. Then we have
\begin{equation}\label{Cea-L-d}
   \mathcal{L}\big({u}^{_N}_{_{\small {\cal T}}};{g}\big) 
    \le \Big| \big(\mathcal{L}-\mathcal{L}_{_{\small {\cal T}}}\big)\big({u}^{_N}_{_{\small {\cal T}}};{g}\big)\Big| + \Big| \big(\mathcal{L}-\mathcal{L}_{_{\small {\cal T}}}\big)\big({u}^{_N};{g}\big)\Big| +\Big| \mathcal{L}\big(u^{_N};{g}\big)\Big|.
\end{equation}
\end{lemma}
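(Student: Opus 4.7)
The plan is to prove the bound by a standard Strang-type decomposition: split the continuous functional evaluated at $u^{_N}_{_\cT}$ into a consistency error (arising from the numerical integration and differentiation that defines $\mathcal L_{_\cT}$) plus the discrete functional, then invoke the minimization property \eqref{discrete_minimization_functional} to compare to $u^{_N}$, and finally pay another consistency error to convert back to $\mathcal L$.

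Concretely, I would first write
\[
\mathcal L\bigl(u^{_N}_{_\cT};g\bigr) = \bigl(\mathcal L - \mathcal L_{_\cT}\bigr)\bigl(u^{_N}_{_\cT};g\bigr) + \mathcal L_{_\cT}\bigl(u^{_N}_{_\cT};g\bigr),
\]
and bound the first summand by its absolute value. Next, since $u^{_N}\in\cM_N$ is admissible in the discrete minimization problem \eqref{discrete_minimization_functional} and $u^{_N}_{_\cT}$ is its minimizer, I get
\[
\mathcal L_{_\cT}\bigl(u^{_N}_{_\cT};g\bigr) \le \mathcal L_{_\cT}\bigl(u^{_N};g\bigr).
\]
This is the one non-cosmetic ingredient of the argument, and it is the step that ties the estimate to the actual LSNN scheme rather than an arbitrary function.

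Finally I would add and subtract $\mathcal L(u^{_N};g)$ to rewrite
\[
\mathcal L_{_\cT}\bigl(u^{_N};g\bigr) = \bigl(\mathcal L_{_\cT} - \mathcal L\bigr)\bigl(u^{_N};g\bigr) + \mathcal L\bigl(u^{_N};g\bigr),
\]
and again pass to absolute values. Chaining the three bounds produces exactly \eqref{Cea-L-d}.

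There is no genuine obstacle: the only content beyond repeated triangle inequalities is the quasi-optimality coming from $u^{_N}_{_\cT}$ being the discrete minimizer, and the fact that $u^{_N}\in\cM_N$ so that it is a valid competitor. The resulting three-term bound is the standard decomposition into (i) the consistency error of the quadrature/discrete-divergence at the computed solution, (ii) the same consistency error at the continuous NN minimizer, and (iii) the best-NN-approximation residual $\mathcal L(u^{_N};g)$, which itself is controlled by Lemma~\ref{4.1}.
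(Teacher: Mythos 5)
Your proposal is correct and follows exactly the same route as the paper: decompose $\mathcal L(u^{_N}_{_\cT};g)$ into consistency error plus $\mathcal L_{_\cT}(u^{_N}_{_\cT};g)$, use the discrete minimization property $\mathcal L_{_\cT}(u^{_N}_{_\cT};g)\le \mathcal L_{_\cT}(u^{_N};g)$ with $u^{_N}\in\cM_N$ as competitor, add and subtract $\mathcal L(u^{_N};g)$, and finish with the triangle inequality. No gaps.
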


\begin{proof}
By the fact that $\cL_{_\cT}(u_{_\cT}^{_N}; {\bf f}) \leq \cL_{_\cT}(u^{_N}; {\bf f})$, we have
\begin{eqnarray}\nonumber
\mathcal{L}\big({u}^{_N}_{_{\small {\cal T}}};{g}\big)
&=& \big(\mathcal{L}-\mathcal{L}_{_{\small {\cal T}}}\big)\big({u}^{_N}_{_{\small {\cal T}}};{g}\big) + \mathcal{L}_{_{\small {\cal T}}}\big({u}^{_N}_{_{\small {\cal T}}};{g}\big)
\leq \big(\mathcal{L}-\mathcal{L}_{_{\small {\cal T}}}\big)\big({u}^{_N}_{_{\small {\cal T}}};{g}\big) + \mathcal{L}_{_{\small {\cal T}}}\big({u}^{_N};{g}\big)\\[2mm] \label{4.10a}
&=& \big(\mathcal{L}-\mathcal{L}_{_{\small {\cal T}}}\big)\big({u}^{_N}_{_{\small {\cal T}}};{g}\big) + \big(\mathcal{L}_{_{\small {\cal T}}}-\mathcal{L}\big)\big({u}^{_N};{g}\big) + \mathcal{L}\big({u}^{_N};{g}\big),
\end{eqnarray}
which, together with the triangle inequality, implies 
(\ref{Cea-L-d}).
\end{proof}

This lemma indicates that the minimum of the discrete least-squares functional $\mathcal{L}_{_{\small {\cal T}}}$ over $\cM_N$ is bounded by the minimum of the least-squares functional $\mathcal{L}$ over $\cM_N$ plus the approximation error of numerical integration and differentiation in $\cM_N$.

In the remainder of this section, we describe the block space-time LSNN method introduced in \cite{Cai2021nonlinear} for dealing with the training difficulty over a relative large computational domain $\Omega$. The method is based on a partition $\{\Omega_{k-1,k}\}_{k=1}^{n_b}$ of the computational domain $\Omega$. To define $\Omega_{k-1,k}$, let $\{\Omega_k\}_{k=1}^{n_b}$ be subdomains of $\Omega$ satisfying the following inclusion relation
 \[
 \emptyset=\Omega_0\subset\Omega_1\subset \cdots \subset \Omega_{n_b}=\Omega.
 \]
Then set $\Omega_{k-1,k}=\Omega_k\setminus \Omega_{k-1}$ for $k=1,\cdots, n_b$. Assume that $\Omega_{k-1,k}$ is in the range of influence of 
\[
\Gamma_{k-1,k}=\partial \Omega_{k-1,k}\cap \partial\Omega_{k-1} \quad\mbox{and}\quad
\Gamma^k_-=\partial \Omega_{k-1,k}\cap \Gamma_-.
\]

Denote by $u^k=u|_{\Omega_{k-1,k}}$ the restriction of the solution $u$ of (\ref{pde1}) on $\Omega_{k-1,k}$, then $u^k$ is the solution of the following problem:
\begin{equation} \label{pde2}
    \left\{\begin{array}{rccl}
    \div_{\!\!_\cT} {\bff} (u^k) &= & 0, &\text{ in }\,\, \Omega_{k-1,k}\in \R^{d+1}, \\[2mm]
    u^k&=&
    u^{k-1},&\text{ on }\,\, \Gamma_{k-1,k} , 
    \\[2mm]
    u^k&=&
    {g},&\text{ on }\,\, \Gamma^k_-.
    \end{array}\right.
\end{equation}
Let
 \[
 \mathcal{L}^k\big(v;u^{k-1},g\big) = 
 \| \div \bff(v)\|_{0,\Omega_{k-1,k}}^2 +
 \|v-u^{k-1}\|_{0, \Gamma_{k-1,k}}^2+\|v-g\|_{0, \Gamma^k_-}^2,
 \]
and define the corresponding discrete least-squares functional
$\mathcal{L}^k_{_{\small {\cal T}}}\big(v;u^{k-1},g\big)$ over the subdomain $\Omega_{k-1,k}$ in a similar fashion as in (\ref{L-NN-d}). Now, the block space-time LSNN method is to find 
${u}^k_{_{\small {\cal T}}}(\bzz,{\small\btheta}^*_k)\in \cM_N$ such that
 \begin{equation}\label{discrete_minimization_functional-block}
  \mathcal{L}^k_{_{\small {\cal T}}} \big({u}^k_{_{\small {\cal T}}}(\cdot,{\small\btheta}_k^*);u^{k-1},{g}\big) 
  = \min\limits_{v\in \cM_N} \mathcal{L}^k_{_{\small {\cal T}}}\big(v(\cdot;{\small\btheta});\,u^{k-1},{g}\big)
 = \min_{{\scriptsize \btheta}\in\R^{N}}\mathcal{L}^k_{_{\small {\cal T}}} \big(v(\cdot; {\small\btheta});u^{k-1},{g}\big)
\end{equation}
for $k=1,\cdots,n_b$.

\section{Discrete Divergence Operator}\label{sec4}

As seen in \cite{Cai2021linear, Cai2021nonlinear}, numerical approximation of the differential operator is critical for the success of the LSNN method. Standard numerical or automatic differentiation along coordinate directions generally results in an inaccurate LSNN method, even for linear problems when solutions are discontinuous. This is because the differential form of the HCL is invalid at discontinuous interface. To overcome this difficulty, we used the discrete directional differentiation for linear problems in \cite{Cai2021linear} and the discrete divergence operator of the Roe and ENO type for nonlinear problems in \cite{Cai2021nonlinear}. 



In this section, we introduce a new discrete divergence operator based on the definition of the divergence operator. Specifically, for each $K\in \cT$, let $\bzz^i_{_K}=(\bx^i_{_K}, t^i_{_K})$ and $\omega_i$ for $i\in J$ be the quadrature points and weights for the quadrature ${\cal Q}_K$, where $J$ is the index set. Hence, the discrete least-squares functional becomes
\[
\mathcal{L}_{_{\small {\cal T}}}\big(v;{g}\big) 
     = \sum_{K \in {\cal T}} \left(\sum_{i\in J} \omega_i \,\div_{\!\!_\cT} \bff\big(v(\bzz^i_{_K})\big)\right)^2+  \sum\limits_{E\in {\cal E}_-} {\cal Q}_E^2\big(v-g\big).
\]
To define the discrete divergence operator $\div_{\!\!_\cT}$, we first construct a set of control volumes 
\[
\cV=\{V :\, V\mbox{ is an open subdomain of } \Omega \}
\]
such that $\cV$ is a partition of the domain $\Omega$ and that each quadrature point is the centroid of a control volume $V\in \cV$. Denote by $V^i_{_K}$ the control volume corresponding to the quadrature point $\bzz^i_{_K}$, by the definition of the divergence operator, we have
\begin{equation}\label{DO}
    \div \bff\big(u(\bzz^i_{_K})\big) \approx \mbox{avg}_{V^i_{_K}}\div \bff(u)=
 \dfrac{1}{|V^i_{_K}|}\int_{\partial V^i_{_K}} \bff(u)\cdot\bn\,dS,
\end{equation}
where the average of a function $\varphi$ over $V^i_{_K}$ is defined by
\[
\mbox{avg}_{V^i_{_K}}\varphi=\dfrac{1}{|V^i_{_K}|} \int_{V^i_{_K}}\varphi(\bzz)\, d\bzz. 
\]
The average of $\varphi$ with respect to the partition $\cV$ is denoted by $\mbox{avg}_{_\cV}\varphi$ and
defined as a piece-wise constant function through its restriction on each $V\in\cV$ by
\[
\mbox{avg}_{_\cV}\varphi\big|_V=\mbox{avg}_V\varphi. 
\]
Now we may design a discrete divergence operator \divt acting on the total flux $\bff(u)$ by approximating the surface integral on the right-hand side of (\ref{DO}). 

All existing conservative schemes of various order such as Roe, ENO, WENO, etc. may be viewed as approximations of the surface integral using values of $\bff(u)$ at some {\it mesh points}, where most of them are outside of $\bar{V}$. These conservative schemes are nonlinear methods because the procedure determining proper mesh points to be used for approximating the average of the spatial flux is a nonlinear process due to possible discontinuity.

Because the LSNN method is a ``mesh/point-less'' space-time method, all points on $\partial V\in \R^{d+1}$ are at our disposal for approximating the surface integral. Hence, the surface integral can be approximated as accurately as desired by using only points on $\partial V$. When $u$ and hence $f_i(u)$ are discontinuous on $\partial V$, the best linear approximation strategy is to use piece-wise constant/linear functions on a sufficiently fine partition of each face of $\partial V$, instead of higher order polynomials on each face. This suggests that a composite lower-order numerical integration such as the composite mid-point/trapezoidal quadrature would provide accurate approximation to the surface integral in (\ref{DO}), and hence the resulting discrete divergence operator would be accurate approximation to the divergence operator, even if the solution is discontinuous. 


\subsection{One Dimension}

For clarity of presentation, the discrete divergence operator described above will be first introduced in this section in one dimension. To this end, to approximate single integral $I(\varphi)=\int^d_c\varphi(s)\,ds$, we will use the composite midpoint/trapezoidal rule:
\begin{equation}\label{integration}
    Q(\varphi(s);c,d,p)=\left\{\begin{array}{ll}
    \dfrac{d-c}{p}\sum\limits_{i=0}^{p-1}\varphi\big(s_{i+1/2}\big), & \mbox{midpoint},\\[6mm]
    \dfrac{d-c}{2p}\left(\varphi(c)+\varphi(d)+2\sum\limits_{i=1}^{p-1}\varphi\big(s_{i}\big)\right), & \mbox{trapezoidal},
    \end{array}\right.
\end{equation}
where $\{s_i\}_{i=0}^p$ uniformly partitions the interval $[c,d]$ into $p$ sub-intervals.

Let $\Omega=(a,b)\times (0,T)$. For simplicity, assume that the integration partition $\cT$ introduced in Section~\ref{sec3} is a uniform partition of the domain $\Omega$; i.e., 
\[
{\cal T}=\{K=K_{ij} :\, i=0,1, \cdots ,m-1;\,\, j=0,1, \cdots ,n-1\} \mbox{ with } K_{ij} = (x_{i},x_{i+1})\times (t_{j},t_{j+1}),
\] 
where $x_i=a+ih$ and $t_j=j\tau$ with $h=(b-a)/m$ and $\delta=T/n$. For integration subdomain $K_{ij}$, the set of quadrature points is
\[
\begin{array}{ll}
M_{ij}=\{\bzz_{i+\frac12,j+\frac12}\}     & \mbox{for the midpoint rule}, \\ 
 [2mm]
 T_{ij}=\{\bzz_{i,j}, \bzz_{i+1,j},\bzz_{i,j+1},\bzz_{i+1,j+1}\}     & \mbox{for the trapezoidal rule}, \\ [2mm]
 \mbox{and }\,\, S_{ij}=M_{ij}\cup T_{ij}\cup \{\bzz_{i+\frac12,j}, \bzz_{i,j+\frac12},\bzz_{i+1,j+\frac12},\bzz_{i+\frac12,j+1}\}     & \mbox{for the Simpson rule} ,
\end{array}
\]
where $\bzz_{i+k,j+l}=\big(x_i+kh,t_j+l\delta\big)$ for $k,l=0$, $1/2,$ or $1$.
Based on those quadrature points, the sets of control volumes may be defined accordingly. For example, the control volume $\cV_m$ for the midpoint rule is $\cT$; the control volume $\cV_t$ for the trapezoidal rule is obtained by shifting control volumes in $\cV_m$ by $\dfrac12 \, (h,\delta)$ plus half-size control volumes along the boundary; and the control volume $\cV_s$ for the Simpson rule is obtained in a similar fashion as $\cV_t$ on the element size of $h/2$ and $\delta/2$ for space and time, respectively.

For simplicity of presentation, we define the discrete divergence operator only for the midpoint rule for it can be defined in a similar fashion for other quadrature. Since $\cV_m=\cT$, i.e., the control volume of $\cV_m$ is the same as the element of $\cT$, for each control volume $V=K_{ij}$, denote its centroid by 
\[
\bzz_V=\bzz_{ij}=(x_i+h/2, t_j+\delta/2).
\]
Denote by $\sigma= f(u)$ the spatial flux, then the total flux is the two-dimensional vector field $\bff (u)= (\sigma,u)$. Denote the first-order finite difference quotients by
\[
\sigma(x_{i},x_{i+1};t)=\dfrac{\sigma(x_{i+1},t)-\sigma(x_{i},t)}{x_{i+1}-x_i}
\quad\mbox{and}\quad
u(x;t_{j},t_{j+1})=\dfrac{u(x,t_{j+1})-u(x,t_{j})}{t_{j+1}-t_{j}}.
\]
Then the surface integral in (\ref{DO}) becomes
\begin{eqnarray}\label{SI}
    \dfrac{1}{|K_{ij}|}\int_{\partial K_{ij}}\bff (u)\cdot\bn\,dS= \delta^{-1}\int^{t_{j+1}}_{t_{j}}\sigma(x_{i},x_{i+1};t)\,dt + h^{-1} \int^{x_{i+1}}_{x_{i}}u(x;t_{j},t_{j+1})\,dx.
\end{eqnarray}
Approximating single integrals by the composite midpoint/trapezoidal rule, we obtain the following discrete divergence operator
\begin{equation}\label{dDO}
    \div{\!\!_{_\cT}} {\bff\big(u(\bzz_{{ij}})\big)}=\delta^{-1} Q(\sigma(x_{i},x_{i+1};t);t_j,t_{j+1},\hat{n}) + h^{-1} Q(u(x;t_j,t_{j+1});x_{i},x_{i+1},\hat{m}).
\end{equation}

\begin{remark}
Denote by $u_{i,j}$ as approximation to $u(x_i,t_j)$. {\em (\ref{dDO})} with $\hat{m}=\hat{n}=1$ using the trapezoidal rule leads to the following implicit conservative scheme for the one-dimensional scalar nonlinear HCL:
\begin{equation}\label{FV}
    \dfrac{u_{i+1,j+1}+u_{i,j+1}}{\delta} 
 + \dfrac{f\big(u_{i+1,j+1}\big)-f\big(u_{i,j+1}\big)}{h}
 =  \dfrac{u_{i+1,j}+u_{i,j}}{\delta} 
 -\dfrac{f\big(u_{i+1,j}\big)-f\big(u_{i,j}\big)}{h}
\end{equation}
for $i=0,1, \cdots ,m-1$ and $j=0,1, \cdots , n-1$.
\end{remark}

Below, we state error estimates of the discrete divergence operator defined in (\ref{dDO}) and postpone their proof to Appendix.

\begin{lemma}\label{3.2}
For any $K_{ij}\in \cT$, assume that $u$ is a $C^2$ function on every edge of the rectangle $\partial K_{ij}$. Then there exists a constant $C>0$ such that
\begin{eqnarray}  \nonumber
 && \| \div_{\!\!{_\cT}} \bff(u)-{\emph{\text{avg}}}_{{_\cT}}
  \div \bff (u)\|_{L^p(K_{ij})}\\[2mm] \label{div-est1_0}
& \leq &C\left(\dfrac{h^{1/p}\delta^{2}}{\hat{n}^2}\|\sigma_{tt}(x_{i+1},x_{i};\cdot)\|_{L^p(t_j,t_{j+1})} 
    + \dfrac{h^{2}\delta^{1/p}}{\hat{m}^2}\|u_{xx}(\cdot;t_{j+1},t_{j})\|_{L^p(x_i,x_{i+1})}\right).
\end{eqnarray}
\end{lemma}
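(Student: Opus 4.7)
The plan is to exploit the observation that, on the control volume $V=K_{ij}$, both $\div_{\!\!_\cT}\bff(u)$ and the cell average of $\div\bff(u)$ are constants whose values differ only by the quadrature error introduced in (\ref{dDO}). Consequently the left-hand side of (\ref{div-est1_0}) factorises as $|K_{ij}|^{1/p}=(h\delta)^{1/p}$ times a single scalar quadrature error. I would first apply the divergence theorem to rewrite the cell average as the surface integral in (\ref{DO}) and then invoke (\ref{SI}) to express it as $\delta^{-1}\int_{t_j}^{t_{j+1}}\sigma(x_i,x_{i+1};t)\,dt+h^{-1}\int_{x_i}^{x_{i+1}}u(x;t_j,t_{j+1})\,dx$. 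Subtracting (\ref{dDO}) splits the discrepancy into two single-variable quadrature errors: a time-integration error scaled by $\delta^{-1}$ (composite rule with $\hat{n}$ subintervals applied to $\sigma(x_{i+1},x_i;\cdot)$ on $[t_j,t_{j+1}]$), plus a space-integration error scaled by $h^{-1}$ (composite rule with $\hat{m}$ subintervals applied to $u(\cdot;t_{j+1},t_j)$ on $[x_i,x_{i+1}]$).

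The main technical ingredient is an $L^p$ error estimate for the composite midpoint/trapezoidal rule of the form
\[
\Bigl|Q(\varphi;c,d,p)-\int_c^d\varphi(s)\,ds\Bigr|\le C\,\frac{(d-c)^{2+1/q}}{p^2}\,\|\varphi''\|_{L^p(c,d)},\qquad \frac{1}{p}+\frac{1}{q}=1,
\]
which I would derive subinterval by subinterval via the Peano kernel on $[s_i,s_{i+1}]$ of length $(d-c)/p$, whose kernel is $O(((d-c)/p)^2)$. H\"older's inequality bounds each subinterval error by $((d-c)/p)^{1+1/q}$ times the local $L^p$ norm of $\varphi''$; summing over subintervals and applying the discrete H\"older inequality recombines the local norms into the global $\|\varphi''\|_{L^p(c,d)}$ with the stated power of $p$. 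Equivalently, a Bramble--Hilbert scaling argument on a single reference subinterval yields the same bound.

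Plugging this estimate into the two quadrature errors, using the triangle inequality, and multiplying by the measure factor $(h\delta)^{1/p}$ yields two contributions. The time piece produces the exponent $\delta^{-1}\cdot\delta^{2+1/q}\cdot\delta^{1/p}=\delta^2$ accompanied by $h^{1/p}\hat{n}^{-2}\|\sigma_{tt}(x_{i+1},x_i;\cdot)\|_{L^p(t_j,t_{j+1})}$, and by symmetry the space piece produces the $h^2$ factor accompanied by $\delta^{1/p}\hat{m}^{-2}\|u_{xx}(\cdot;t_{j+1},t_j)\|_{L^p(x_i,x_{i+1})}$, where $1/p+1/q=1$ is used to collapse the exponents. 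This is precisely (\ref{div-est1_0}). The main obstacle is obtaining the $L^p$ composite quadrature estimate with the correct fractional exponent bookkeeping; once it is in hand, the remainder is a straightforward algebraic manipulation of the decomposition supplied by the divergence theorem.
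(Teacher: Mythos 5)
Your proposal is correct and follows essentially the same route as the paper's proof: factor out the measure $(h\delta)^{1/p}$ since both quantities are constant on $K_{ij}$, decompose the discrepancy via (\ref{SI}) and (\ref{dDO}) into two one-dimensional composite quadrature errors, and apply the per-subinterval $C^2$ estimate (the paper's (\ref{Err_Smooth})) followed by discrete H\"older to recover the global $L^p$ norms with the $\hat{n}^{-2}$ and $\hat{m}^{-2}$ factors. The only cosmetic difference is that you package the subinterval-by-subinterval summation into a standalone composite-rule lemma before substituting, whereas the paper carries the sums explicitly; the exponent bookkeeping is identical.
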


This lemma indicates that $\hat{m}=1$ and $\hat{n}=1$ are sufficient if the solution is smooth on $\partial K_{ij}$. In this case, we may use higher order numerical integration, e.g., the Gauss quadrature, to approximate the surface integral in (\ref{SI}) for constructing a higher order discrete divergence operator.

When $u$ is discontinuous on $\partial K_{ij}$, error estimate on the discrete divergence operator becomes more involved. To this end, first we consider the case that the discontinuous interface $\Gamma_{ij}$ (a straight line) intersects two horizontal boundary edges of $K_{ij}$. 
Denote by $u_{ij}=u|_{K_{ij}}$ the restriction of $u$ in $K_{ij}$ and by $\jump{u_{ij}}_{t_l}$ the jump of $u_{ij}$ on the horizontal boundary edge $t=t_l$ of $K_{ij}$, where $l=j$ and $l=j+1$.

\begin{lemma}\label{3.3}
Assume that $u$ is a $C^2$ function of $t$ and a piece-wise $C^2$ function of $x$ on two vertical and two horizontal edges of $K_{ij}$, respectively. Moreover, $u$ has only one discontinuous point on each horizontal edge. 
Then there exists a constant $C>0$ such that
\begin{eqnarray} \nonumber
  &&   \| \div_{\!\!_\cT}  \bff(u)-{\emph{\text{avg}}}_{_\cT}\div \bff (u) \|_{L^p(K_{ij})} \\[2mm] \label{div-est2_0}  
     &\leq & C\left(\dfrac{h^{1/p}\delta^{2}}{\hat{n}^2}
    + \dfrac{h^{2}\delta^{1/p}}{\hat{m}^2} +\dfrac{h\delta^{1/p}}{\hat{m}^{1+1/q}}\right) 
    + \dfrac{(h\delta)^{1/p}}{\hat{m}}\sum_{l=j}^{j+1} \jump{u_{ij}}_{t_l}.
\end{eqnarray}
\end{lemma}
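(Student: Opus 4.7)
The plan is to decompose the pointwise error $\div_{\!\!_\cT}\bff(u) - \mbox{avg}_{_\cT}\div\bff(u)$ on $K_{ij}$ into the two contributions corresponding to the two pairs of edges of $\partial K_{ij}$ in (\ref{SI})--(\ref{dDO}). Since both quantities are constant on $K_{ij}$, one has $\|\varphi\|_{L^p(K_{ij})} = (h\delta)^{1/p}|\varphi|$ for any constant $\varphi$, so it suffices to bound the absolute value of the pointwise error and then multiply by $(h\delta)^{1/p}$. The first contribution comes from the temporal quadrature of $\sigma(x_i,x_{i+1};t)$ on $(t_j,t_{j+1})$: because $u$ is $C^2$ in $t$ on each vertical edge, so is $\sigma(x_i,x_{i+1};\cdot)$, and the classical composite midpoint/trapezoidal error estimate used in the proof of Lemma~\ref{3.2} yields the first term $\tfrac{h^{1/p}\delta^{2}}{\hat{n}^{2}}$ of (\ref{div-est2_0}).

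The heart of the proof lies in the second contribution, the quadrature error of $\int_{x_i}^{x_{i+1}} u(x;t_j,t_{j+1})\,dx$ divided by $h$, where the integrand is only piecewise $C^2$ in $x$ with at most two jump discontinuities located at the $x$-projections $x_j^*, x_{j+1}^*$ of the discontinuity points on the horizontal edges $t=t_j, t_{j+1}$. I would partition $[x_i,x_{i+1}]$ into $\hat{m}$ uniform sub-intervals $I_k$ of length $\eta=h/\hat{m}$ and classify each $I_k$ as \emph{smooth} (containing neither $x_j^*$ nor $x_{j+1}^*$) or \emph{discontinuous} (containing at least one). On each smooth $I_k$ the Peano-kernel representation of the quadrature rule combined with H\"older's inequality gives a per-sub-interval error bounded by $C\eta^{2+1/q}\|u_{xx}(\cdot;t_j,t_{j+1})\|_{L^p(I_k)}$; summing over the smooth sub-intervals via the discrete H\"older inequality and using $1/p+1/q=1$ yields a total contribution to the absolute pointwise error of order $h^{1+1/q}\hat{m}^{-2}\|u_{xx}\|_{L^p(x_i,x_{i+1})}$, which after multiplication by $(h\delta)^{1/p}$ produces the second term $\tfrac{h^{2}\delta^{1/p}}{\hat{m}^{2}}$.

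For each discontinuous sub-interval $I_k=(s_k,s_{k+1})$ containing a jump at $x_l^*$, I would select the one-sided $C^2$ extension $\tilde{u}$ of the piece of $u(\cdot;t_j,t_{j+1})$ that lies on the same side of $x_l^*$ as the quadrature node $m$ (for the midpoint rule; for the trapezoidal rule each endpoint is treated analogously) and write
\[
\int_{I_k}u(x;t_j,t_{j+1})\,dx - \eta\,u(m;t_j,t_{j+1}) = \Bigl[\int_{I_k}\tilde{u}(x)\,dx - \eta\,\tilde{u}(m)\Bigr] + \jump{u_{ij}}_{t_l}\cdot \frac{s_{k+1}-x_l^*}{\delta}.
\]
The jump summand is bounded by $\eta\jump{u_{ij}}_{t_l}/\delta$; applying the factors $h^{-1}(h\delta)^{1/p}$ and summing over the at most two discontinuous sub-intervals yields the final jump term in (\ref{div-est2_0}). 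For the bracketed summand, the Peano-kernel bound via $u_{xx}$ is unavailable because $\tilde{u}$ is only one-sidedly $C^2$; instead I would invoke the fundamental theorem of calculus on each piece combined with H\"older's inequality, $|\tilde{u}(x)-\tilde{u}(m)|\le |x-m|^{1/q}\|\tilde{u}_x\|_{L^p(I_k)}$, and integrate over $I_k$ to obtain a bound of order $\eta^{1+1/q}\|u_x(\cdot;t_j,t_{j+1})\|_{L^p(I_k)}$; applying $h^{-1}(h\delta)^{1/p}$ then produces the third term $\tfrac{h\delta^{1/p}}{\hat{m}^{1+1/q}}$.

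The main obstacle is the handling of the discontinuous sub-intervals: one must simultaneously isolate the jump contribution cleanly and bound the remainder without invoking $u_{xx}$ across $x_l^*$, so the H\"older-type estimate using only one-sided first derivatives is essential and explains the unusual exponent $1+1/q$. Adapting the argument from the midpoint rule to the trapezoidal rule is routine (each endpoint plays the role of $m$ with an analogous one-sided extension), and the final estimate (\ref{div-est2_0}) follows by combining the vertical-edge bound from Lemma~\ref{3.2} with the smooth-sub-interval and discontinuous-sub-interval contributions.
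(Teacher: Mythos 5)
Your proposal follows essentially the same route as the paper's proof: the smooth temporal quadrature and the smooth spatial sub-intervals are handled exactly as in Lemma~\ref{3.2}, and the at most two sub-intervals containing a discontinuity are treated by isolating the jump and bounding the remainder with a one-sided first-derivative/H\"older estimate, which is precisely the content of the paper's auxiliary Lemma~\ref{7.1} (your one-sided extension $\tilde u$ is a cosmetic repackaging of that lemma, and your displayed identity for $\int_{I_k}u\,dx-\eta\,u(m)$ should really be an equality only up to a first-derivative correction, which your subsequent FTC-plus-H\"older step absorbs anyway). All exponents $\hat m^{-2}$, $\hat m^{-(1+1/q)}$, $\hat m^{-1}$ match the paper's, including the same implicit bookkeeping of the factor $1/\delta$ carried by the difference quotient in the jump term.
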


\begin{remark}\label{r3.4}
{\em Lemma~\ref{3.3}} implies that the choice of the number of sub-intervals of $(x_i,x_{i+1})$ on the composite numerical integration depends on the size of the jump of the solution and that large $\hat{m}$ would guarantee accuracy of the discrete divergence operator when $u$ is discontinuous on $\partial K_{ij}$.
\end{remark}

\begin{remark}\label{4.5}
Error bounds similar to {\em (\ref{div-est2_0})} hold for the other cases: $\Gamma_{ij}$ intercepts {\em (i)} two vertical edges or {\em (ii)} one horizontal and one vertical edges of $K_{ij}$. Specifically, we have
\[
\| \div_{\!\!_\cT}  \bff(u)-{\emph{\text{avg}}}_{_\cT}\div \bff (u) \|_{L^p(K_{ij})} \leq  C\left(\dfrac{h^{1/p}\delta^{2}}{\hat{n}^2}
    + \dfrac{h^{2}\delta^{1/p}}{\hat{m}^2} +\dfrac{h^{1/p}\delta }{\hat{n}^{1+1/q}}\right) 
    + \dfrac{(h\delta)^{1/p}}{\hat{n}}\sum_{l=i}^{i+1} \jump{\sigma_{ij}}_{x_l}
\]
for the case {\em (i)} and 
\[
\| \div_{\!\!_\cT}  \bff(u)-{\emph{\text{avg}}}_{_\cT}\div \bff (u) \|_{L^p(K_{ij})} \leq  C\left(\dfrac{h^{1/p}\delta^{2}}{\hat{n}^2}
    + \dfrac{h^{2}\delta^{1/p}}{\hat{m}^2}
    +\dfrac{h\delta^{1/p}}{\hat{m}^{1+1/q}}+\dfrac{h^{1/p}\delta }{\hat{n}^{1+1/q}}\right) 
    +E_{ij}
\]
for the case {\em (ii)}, where $E_{ij}=(h\delta)^{1/p}\left(\dfrac{1}{\hat{m}}\jump{u_{ij}}_{t_l} + \dfrac{1}{\hat{n}}\jump{\sigma_{ij}}_{x_l}\right)$ with $x_l=x_i$ or $x_{i+1}$ and $t_l=t_j$ or $t_{j+1}$.
\end{remark}

\subsection{Two Dimensions}
This section describes the discrete divergence operator in two dimensions. As in one dimension, the discrete divergence operator is defined as an approximation to the average of the divergence operator through the composite mid-point/trapezoidal quadrature to approximate the surface integral (\ref{DO}). Extension to three dimensions is straightforward. 

To this end, we first describe the composite mid-point/trapezoidal numerical integration for approximating a double integral over a rectangle region $T=(c_1,d_1)\times (c_2,d_2)$
\begin{eqnarray*}
  I(\varphi)&=&\int_T \varphi(s_1,s_2)\,ds_1ds_2\\[2mm] &\approx & Q\big(\varphi(s_1,s_2);c_1,d_1,p_1;c_2,d_2,p_2\big)\equiv Q\Big(Q\big(\varphi(s_1,\cdot);c_1,d_1,p_1\big)(s_2);c_2,d_2,p_2\Big),
\end{eqnarray*}
where $Q\big(\varphi(s_1,\cdot);c_1,d_1,p_1\big)$ is the composite quadrature defined in (\ref{integration}).

For simplicity, let $\Omega=\tilde{\Omega}\times I = (a_1,b_1)\times (a_2,b_2)\times (0,T)$, and assume that the integration partition $\cT$ introduced in Section~\ref{sec3} is a uniform partition of the domain $\Omega$; i.e., 
\[
{\cal T}=\{K=K_{ijk} :\, i=0,1, \cdots ,m_1-1;\,\, j=0,1, \cdots ,m_2-1; \,\, k=0,1, \cdots ,n-1\}
\] 
with $K_{ijk} = (x_{i},x_{i+1})\times (y_{j},y_{j+1})\times (t_{k},t_{k+1})$, 
where 
\[
x_i=a_1+ih_1, \quad y_j=a_2+jh_2, \quad\mbox{and}\quad t_k=k\delta,
\]
and $h_l=(b_l-a_l)/m_l$ for $l=1,\,2$ and $\delta=T/n$ are the respective spatial and temporal sizes of the integration mesh. Again, we define the discrete divergence operator only corresponding to the midpoint rule. Denote the mid-point of $K_{ijk}$ by
\[ 
\bzz_{ijk}=(x_i+\dfrac{h_1}{2}, y_j+\dfrac{h_2}{2}, t_k+\dfrac{\delta}{2}).
\] 

Let $\bsigma=(\sigma_1,\sigma_2)= (f_1(u),f_2(u))$, then the space-time flux is the three-dimensional vector field: $\bff (u)= (\bsigma,u)=(\sigma_1,\sigma_2,u)$. Denote the the first-order finite difference quotients by
\begin{eqnarray*}
&& \sigma_1(y,t;x_{i},x_{i+1})=\dfrac{\sigma_1(x_{i+1},y,t)-\sigma_1(x_{i},y,t)}{x_{i+1}-x_i}, \quad \sigma_2(x,t;y_{j},y_{j+1})=\dfrac{\sigma_2(x,y_{j+1},t)-\sigma_1(x_,y_j,t)}{y_{j+1}-y_{j}},\\[2mm]
&& \mbox{and }\,\,  u(x,y;t_{k},t_{k+1})=\dfrac{u(x,y,t_{k+1})-u(x,y,t_{k})}{t_{k+1}-t_{k}}.
\end{eqnarray*}
Denote three faces of $\partial K_{ijk}$ by
\[
K_{ij}^{xy}=(x_{i},x_{i+1})\times (y_{j},y_{j+1}), \,\, K_{ik}^{xt}=(x_{i},x_{i+1})\times (t_{k},t_{k+1}), \,\mbox{ and }\, K_{jk}^{yt}= (y_{j},y_{j+1})\times (t_{k},t_{k+1}).
\]
Then the surface integral in (\ref{DO}) becomes
\begin{eqnarray}\nonumber
    && \dfrac{1}{|K_{ijk}|}
    \int_{\partial K_{ijk}}\!\!\!\!\bff (u)\cdot\bn\,dS = (h_2\delta)^{-1}\int_{K^{yt}_{jk}}\!\!\sigma_1(y,t;x_{i+1},x_i)\,dydt \\[2mm] \label{SI-3}
    && \qquad\quad +\, (h_1\delta)^{-1}\int_{K^{xt}_{ik}}\!\!\sigma_2(x,t;y_{j+1},y_j)\,dxdt +(h_1h_2)^{-1} \int_{K^{xy}_{ij}}u(x,y;t_{k+1},t_k)\,dxdy.
\end{eqnarray}
Approximating double integrals by the composite midpoint/trapezoidal rule, we obtain the following discrete divergence operator
\begin{eqnarray}\nonumber 
     \div{\!\!_{_\cT}} {\bff\big(u(\bzz_{{ijk}})\big)}
    &=& (h_2\delta)^{-1} Q\big(\sigma_1(y,t;x_{i+1},x_i);y_j,y_{j+1},\hat{m}_2;t_k,t_{k+1},\hat{n}\big) \\[2mm] \nonumber 
    &&  + (h_1\delta)^{-1} Q\big(\sigma_2(x,t;y_{j+1},y_j);x_i,x_{i+1},\hat{m}_1;t_k,t_{k+1},\hat{n}\big)\\[2mm]\label{dDO-2}
    && 
    + (h_1h_2)^{-1} Q\big(u(x,y;t_{k+1},t_k);x_i,x_{i+1},\hat{m}_1;y_j,y_{j+1},\hat{m}_2\big).
\end{eqnarray}

\subsection{Integration mesh size} 
The discrete divergence operator defined in (\ref{dDO}) and (\ref{dDO-2}) for the respective one- and two- dimension is based on the composite midpoint/trapezoidal rule. As shown in Lemmas~\ref{3.2} and \ref{3.3} and Remark~\ref{4.5}, the discrete divergence operator can be as accurate as desired for the discontinuous solution provided that the size of integration mesh is sufficiently small. 

To reduce computational cost, note that the discontinuous interfaces of the solution $u$ lie on $d$-dimensional hyper-planes. Hence, they only intersect with a small portion of control volumes in $\cT$. This observation suggests that sufficiently fine meshes are only needed for control volumes at where the solution is possibly discontinuous. To realize this idea, we divide the set of control volumes into two subsets: 
\[
\cT=\cT_c\cup \cT_d, 
\]
where the solution $u$ is continuous in each control volume of $\cK^l_c$ and possibly discontinuous at some control volumes of $\cT_d$; i.e., 
\[
 \cT_c=\{K\in\cT :\, u\in C(K)\}
 \quad\mbox{and}\quad  \cT_d =\cT\setminus \cT_c.
\]

Next, we describe how to determine the set of control volumes $\cT_d$ in one dimension by the range of influence. It is well-known that characteristic curves are straight lines before their interception and are given by
\begin{equation}\label{charac-curve}
    x = x(T_l)+ \left(t-T_l\right)f^\prime\big(u\left(x(T_l),T_l\right)\big). 
\end{equation}
For $i=0,1, \cdots, m$, let
\[
\hat{x}_i=x_i+ \left(T_{l+1}-T_l\right)f^\prime\big(u^l_{_N}\left(x_i,T_l\right)\big),
\]
where $u^l_{_N}\left(x_i,T_l\right)$ is the neural network approximation from the previous time block \[
\Omega\times I_{l-1}=(a,b)\times(T_{l-1},T_l). 
\]

Clearly, the solution $u$ is discontinuous in a control volume $V_i\times I_l^{k}$ if either (1) $u(x, T_l)$ is discontinuous at the interval $V_i$ or (2) there are two characteristic lines intercepting in $V_i\times I_l^k$. In the first case, $V_i\times I_l^k$ is in $\cK^l_d$ if $u^l_{_N}(x,T_l)$ has a sharp change in the interval $V_i$; moreover, either $V_{i-1}\times I_l^k\in \cK^l_d$ if $\hat{x}_i< x_i$ or $V_{i+1}\times I_l^k\in \cK^l_d$ if $\hat{x}_{i+1}> x_{i+1}$. In the second case, assume that $\hat{x}_i>\hat{x}_{i+1}$, then $V_i\times I_l^k\in \cK^l_d$ if $\hat{x}_i<x_{i+1}$.

\section{Numerical Experiments}\label{sec5}
This section presents numerical results of the block space-time LSNN method for one and two dimensional problems. Let $\Omega = \tilde{\Omega}\times (0,T)$. The $k^{\text{th}}$ space-time block is defined as 
\[
\Omega_{k-1,k}=\Omega_{k}\setminus \Omega_{k-1}=\tilde{\Omega}\times \left(\frac{(k-1)T}{n_b}, \frac{kT}{n_b}\right) \quad\mbox{for }\,\, k=1,\cdots,n_b,
\] 
where $\Omega_k = \tilde{\Omega}\times \left(0, kT/n_b\right)$. For efficient training, the least-squares functional is modified as follows:
\begin{equation}\label{training_weight}
 \mathcal{L}^k\big(v;u^{k-1},g\big) = 
\| \div \bff(v)\|_{0,\Omega_{k-1,k}}^2 +
 \alpha( \|v-u^{k-1}\|_{0, \Gamma_{k-1,k}}^2+\|v-g\|_{0, \Gamma^k_-}^2), 
\end{equation}
where $\alpha$ is a weight to be chosen empirically.

Unless otherwise stated, the integration mesh 
$\mathcal{T}_k$ is a uniform partition of $\Omega_{k-1,k}$ with $h=\delta=0.01$, and the discrete divergence operator defined in (\ref{dDO}) is based on the composite trapezoidal rule with $\hat{m}=\hat{n}=2$. Three-layer or four-layer neural network are employed for all test problems and are denoted by $d_in$-$n_1$-$n_2$(-$n_3$)-1 with $n_1$, $n_2$ and $n_3$ neurons in the respective first, second and third (for a four-layer NN)layers. The same network structure is used for all time blocks.

The network is trained by using the ADAM \cite{kingma2015} (a variant of the method of gradient descent) with either a fixed or an adaptive learning rate to iteratively solve the minimization problem in (\ref{discrete_minimization_functional-block}). Parameters of the first block is initialized by an approach introduced in \cite{LiuCai1}, and those for the current block is initialized by using the NN approximation of the previous block (see Remark 4.1 of \cite{Cai2021nonlinear}). 

The solution of the problem in (\ref{pde2}) and its corresponding NN approximation are denoted by $u^k$ and $u^k_{_\cT}$, respectively. Their traces are depicted on a plane of given time and exhibit capability of the numerical approximation in capturing shock/rarefaction. 

\subsection{Inviscid Burgers' equation}\label{sec5.1}

This section reports numerical results of the block space-time LSNN method for the one dimensional inviscid Burgers equation, where the spatial flux is ${\tilde\bff}(u) =f(u)= \frac12 u^2$.

\begin{table}[htbp]
\centering
\caption{Relative $L^2$ errors of Riemann problem {\em (}shock{\em )} for Burgers' equation}
\vspace{5pt}
\begin{tabular}{|l|l|l|}
\hline
Network structure &Block & $\frac{\|u^k-u^k_{_\cT}\|_0}{\|u^k\|_0}$ \\ \hline
 \multirow{3}{*}{2-10-10-1} & $\Omega_{0,1}$  &0.048774 \\ \cline{2-3}
 & $\Omega_{1,2}$  &0.046521 \\ \cline{2-3}
 & $\Omega_{2,3}$  &0.044616 \\ \hline
\end{tabular}
\centering
\label{riemann_shock_table}
\end{table}

\begin{figure}[htbp]
  \centering 
  \subfigure[Exact solution $u$ on $\Omega$]{ 
    \includegraphics[width=2in]{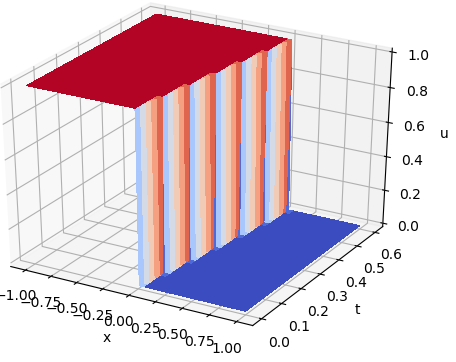}}
\hspace{0.5in}
    \subfigure[Traces at $t=0.2$ 
    ]{ 
    \includegraphics[width=2in]{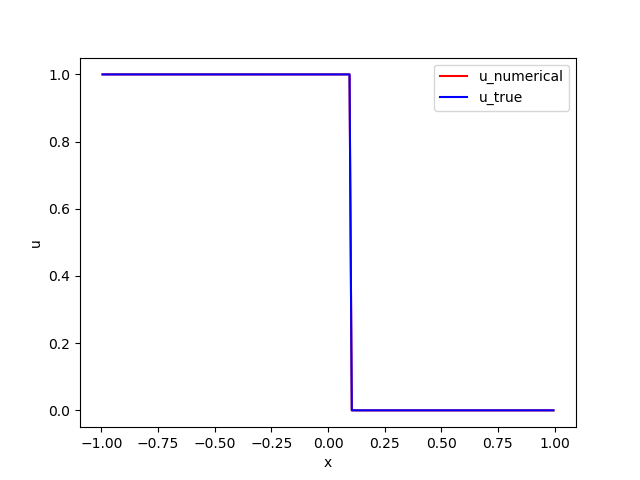}} 
  \\
\subfigure[Traces at $t=0.4$ 
]{ 
    \includegraphics[width=2in]{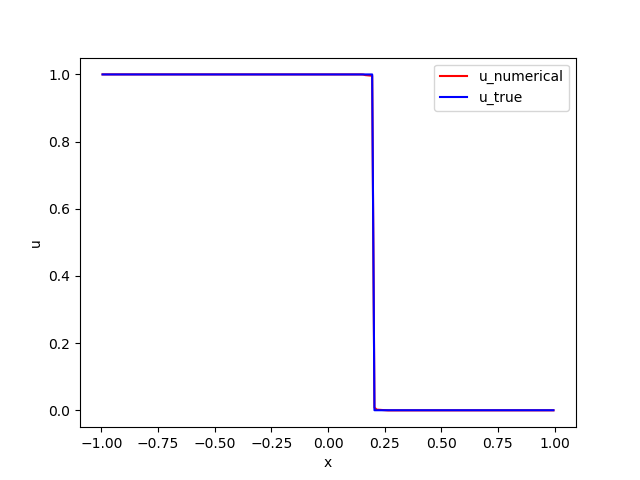}} 
     \hspace{0.5in} 
\subfigure[Traces at $t=0.6$ 
]{ 
    \includegraphics[width=2in]{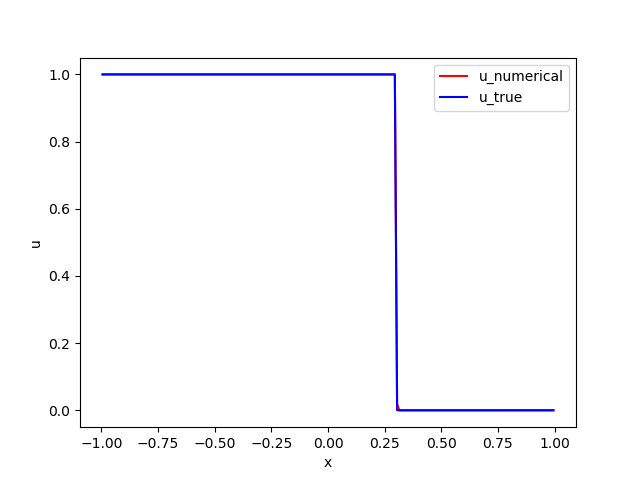}} 
  \caption{Approximation results of Riemann problem {\em (}shock{\em )} for Burgers' equation} 
  \label{riemann_shock_figure}
\end{figure}

The first two test problems are the Riemann problem with the initial condition: $u_0(x)=u(x,0)=u_{_L}$ if $x\leq 0$ or $u_{_R}$ if $x \ge 0$. 

\noindent{\bf Shock formation.} When $u_{_L}=1> 0=u_{_R}$, a shock is formed immediately with the shock speed $s=\left(u_{_L}+ u_{_R}\right)/2$. The first test problem is defined on a computational domain $\Omega =(-1,1) \times (0,0.6)$ with inflow boundary
\[
\Gamma_- = \Gamma_-^L\cup \Gamma_-^R\equiv \{(-1,t): t\in [0,0.6] \} \cup  \{(1,t): t\in [0,0.6]\}
\] 
and boundary conditions: $g=u_{_L}=1$ on $\Gamma_-^L$ and $g=u_{_R}=0$ on $\Gamma_-^R$. With $n_b=3$ blocks, weight $\alpha = 20$, a fixed learning rate $0.003$, and $30000$ iterations for each block, the relative errors in the $L^2$ norm are reported in Table \ref{riemann_shock_table}. Traces of the exact solution and numerical approximation on the planes $t=kT/n_b$ for $k=1,2, 3$ are depicted in Fig. \ref{riemann_shock_figure}(b)-(d), which clearly indicate that the LSNN method is capable of capturing the shock formation and its speed. Moreover, it approximates the solution well without oscillations. 

\begin{table}[htbp]
\centering
\caption{Relative $L^2$ errors of Riemann problem {\em (}rarefaction{\em )} for Burgers' equation}
\vspace{5pt}
\begin{tabular}{|l|l|l|}
\hline
Network structure &Block & $\frac{\|u^k-u^k_{_\cT}\|_0}{\|u^k\|_0}$ \\ \hline
\multirow{2}{*}{2-10-10-1} & $\Omega_{0,1}$&0.013387 \\ \cline{2-3}
& $\Omega_{1,2} $&0.010079 \\ \hline
\end{tabular}
\centering
\label{riemann_rare_table}
\end{table}

\begin{figure}[htbp]
  \centering
  \subfigure[Exact solution $u$ on $\Omega$]{      \includegraphics[width=1.65in]{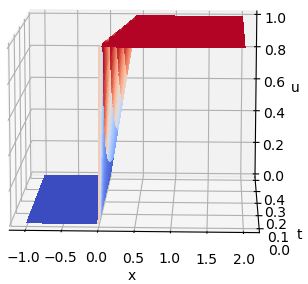}} 
    \subfigure[Traces at $t=0.2$ ]{ 
    \includegraphics[width=1.9in]{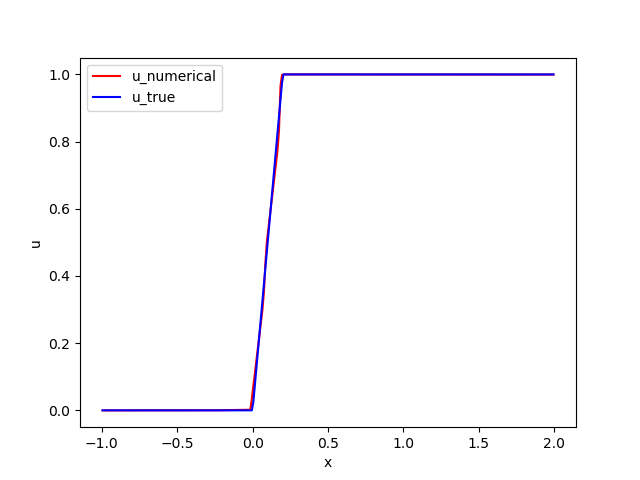}} 
  \hspace{0.06in} 
  \subfigure[Traces at $t=0.4$ ]{ 
    \includegraphics[width=1.9in]{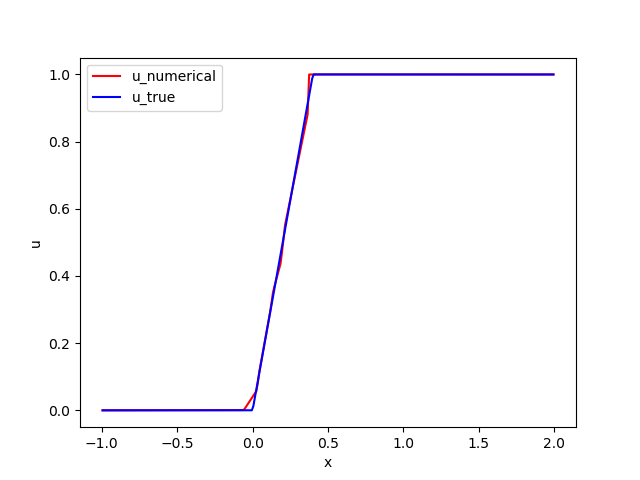}}
  \caption{Approximation results of Riemann problem {\em (}rarefaction{\em )} for Burgers' equation} 
  \label{riemann_rare_figure}
\end{figure}

\noindent{\bf Rarefaction waves}. When $u_{_L}=0 < 1=u_{_R}$, the range of influence of all points in $\R$ is a proper subset of $\R\times [0,\infty)$. Hence, the weak solution of the scalar hyperbolic conservation law is not unique. The second test problem is 
defined on a computational domain $\Omega =(-1,2) \times (0,0.4)$ with inflow boundary condition $g=0$ on $\Gamma_-=\{(-1,t): t\in [0,0.4] \}$.
As shown in Section~5.1.2 of \cite{Cai2021nonlinear}, the LSNN method using Roe’s scheme has a limitation to resolve the rarefaction. Numerical results of the LSNN method using the discrete divergence operator ($n_b=2$, $\alpha = 10$, a fixed learning rate $0.003$, and $40000$ iterations) are reported in Table \ref{riemann_rare_table}. 
Traces of the exact solution and numerical approximation on the planes $t=0.2$ and $t=0.4$ are depicted in Fig. \ref{riemann_rare_figure}. This test problem shows that the LSNN method using the \divt is able to compute the physically relevant vanishing viscosity solution (see, e.g., \cite{leveque1992numerical, thomas2013numerical}) without special treatment. This is possibly due to the fact that the LSNN approximation is continuous. 

\noindent {\bf Sinusoidal initial condition}. The third test problem has smooth initial condition $u_0(x) = 0.5 + \sin(\pi x)$ and is defined on the computational domain $\Omega = (0,2)\times (0,0.8)$ with inflow boundary 
\[
\Gamma_- = \Gamma_-^L\cup \Gamma_-^R\equiv \{(0,t): t\in [0,0.8] \} \cup  \{(2,t): t\in [0,0.8]\}.
\]
The shock of the problem appears at $t = 1/\pi \approx 0.318$. This is the same test problem as in Section~5.2 of \cite{Cai2021nonlinear} (see also \cite{leveque2020, Qiu2020}). The goal of this experiment is to compare numerical performances of the LSNN methods using the \divt introduced in this paper and the ENO scheme in \cite{Cai2021nonlinear}.


Since the solution of this problem is implicitly given, to accurately measure the quality of NN approximations, a benchmark reference solution $\hat{u}$ is generated using the traditional mesh-based method. In particular, the third-order accurate WENO scheme \cite{shu1998essentially} and the fourth-order Runge-Kutta method are employed for the respective spatial and temporal discretizations with a fine mesh ($\Delta x = 0.001$ and $\Delta t =0.0002$) on the computational domain $\Omega$. 

The LSNN using \divt is implemented with the same set of hyper parameters as in Section~5.2 of \cite{Cai2021nonlinear}, i.e., training weight $\alpha=5$ and an adaptive learning rate which starts with 0.005 and reduces by half for every 25000 iterations. Setting $n_b=16$ and on each time block, the total number of iterations is set as 50000 and the size of the NN model is 2-30-30-1. Although we observe some error accumulation when the block evolves for both the LSNN methods, the one using \divt performs better than that using ENO (see Table~\ref{sin_table} for the relative $L^2$ norm error and Fig.~\ref{sin_figure}(a)-(h) for graphs near the left side of the interface). 


\begin{table}[htbp]
\centering
\caption{Relative $L^2$ errors of Burgers' equation with a sinusoidal initial condition}
\vspace{5pt}
\begin{tabular}{|l |c | c | c |}
\hline
 Network structure &Block &
\multicolumn{1}{|p{4cm}|}{\centering LSNN using \divt \\ $\frac{\|u^k-u^k_{_\cT}\|_0}{\|u^k\|_0}$ } &
\multicolumn{1}{|p{4cm}|}{\centering LSNN using ENO \cite{Cai2021nonlinear} \\ $\frac{\|u^k-u^k_{_\cT}\|_0}{\|u^k\|_0}$ }
\\ 
\hline
\multirow{16}{*}{2-30-30-1} & $\Omega_{0,1}$ &0.010641 & 0.010461 \\ \cline{2-4}
 & $\Omega_{1,2}$ &0.011385 & 0.012517 \\ \cline{2-4}
 & $\Omega_{2,3}$ &0.012541 & 0.019772\\ \cline{2-4}
 & $\Omega_{3,4}$ &0.014351 & 0.022574\\ \cline{2-4}
 & $\Omega_{4,5}$ &0.016446 & 0.029011\\ \cline{2-4}
 & $\Omega_{5,6}$ &0.018634 & 0.038852\\ \cline{2-4}
 & $\Omega_{6,7}$ &0.031103 & 0.075888\\ \cline{2-4}
 & $\Omega_{7,8}$ &0.053114 & 0.078581\\ \cline{2-4}
 & $\Omega_{8,9}$ & 0.053562& --\\ \cline{2-4}
 & $\Omega_{9,10}$ &0.064933 &-- \\ \cline{2-4}
 & $\Omega_{10,11}$ &0.061354 & --\\ \cline{2-4}
 & $\Omega_{11,12}$ &0.077982 & --\\ \cline{2-4}
 & $\Omega_{12,13}$ &0.061145 & --\\ \cline{2-4}
 & $\Omega_{13,14}$ &0.070554 & --\\ \cline{2-4}
  & $\Omega_{14,15}$ &0.068539 & --\\ \cline{2-4}
 & $\Omega_{15,16}$ &0.065816 & --\\ \cline{2-4}
\hline
\end{tabular}
\centering
\label{sin_table}
\end{table}


\begin{figure}[htbp]
  \centering 
    \subfigure[Traces at $t=0.05$ 
    ]{ 
    \includegraphics[width=1.7in]{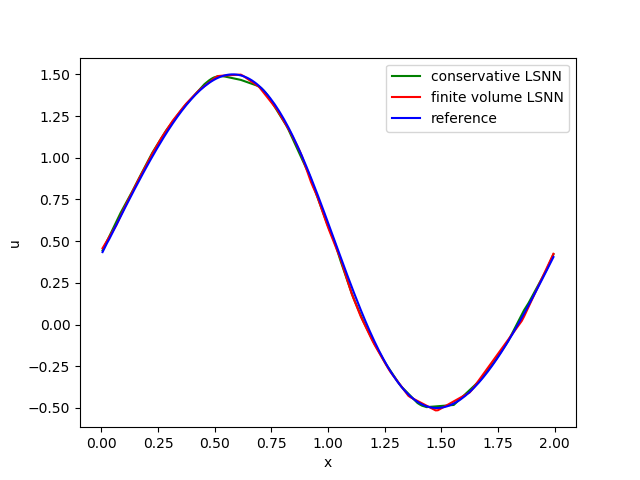}} 
  \hspace{0.2in} 
  \subfigure[Traces at $t=0.1$ 
  ]{ 
    \includegraphics[width=1.7in]{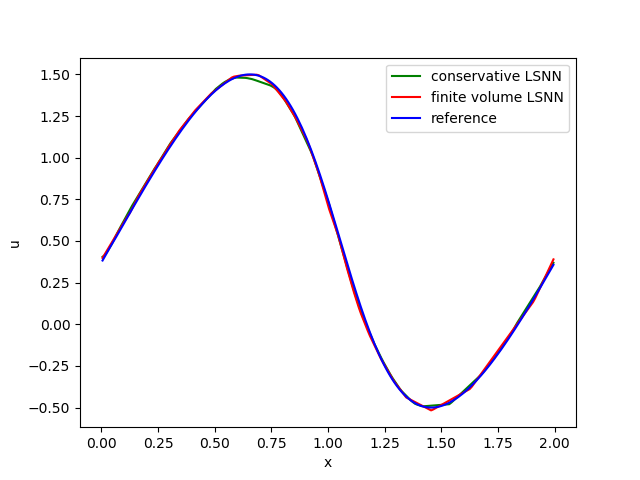}} 
  \hspace{0.2in} 
  \subfigure[Traces at $t=0.15$ 
  ]{ 
    \includegraphics[width=1.7in]{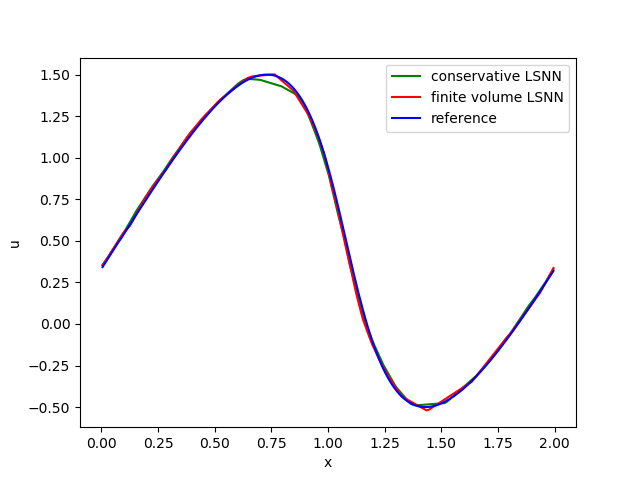}}
\\
      \subfigure[Traces at $t=0.2$ 
      ]{ 
    \includegraphics[width=1.7in]{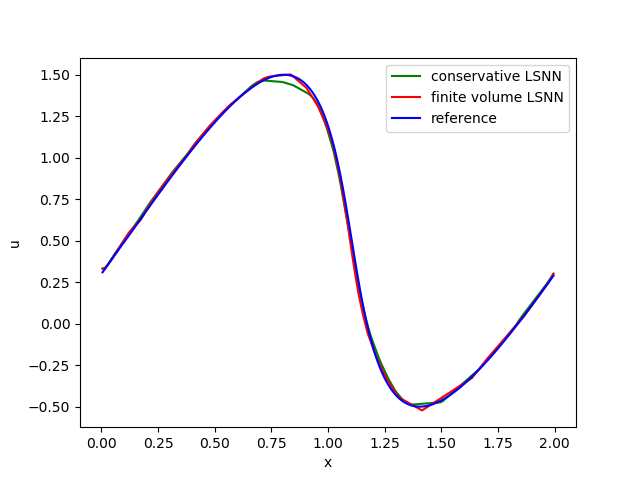}}
     \hspace{0.2in} 
\subfigure[Traces at $t=0.25$ 
]{ 
    \includegraphics[width=1.7in]{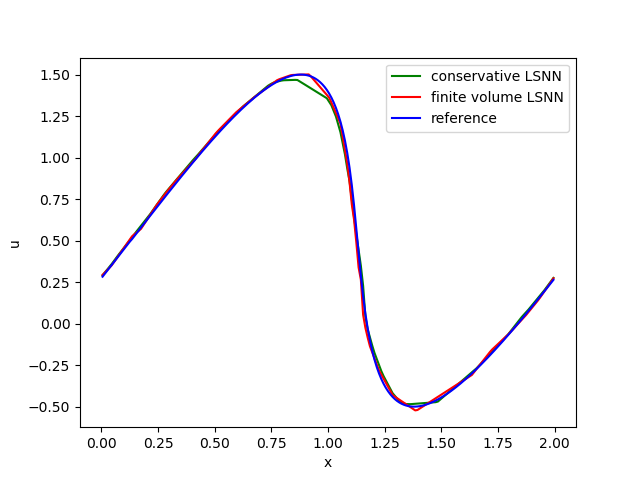}} 
    \hspace{0.2in} 
\subfigure[Traces at $t=0.3$ 
]{ 
    \includegraphics[width=1.7in]{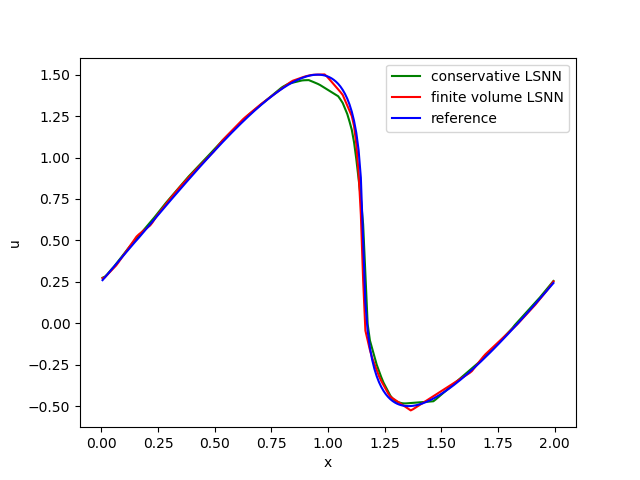}}
\\
     \subfigure[Traces at $t=0.35$ 
     ]{ 
    \includegraphics[width=1.7in]{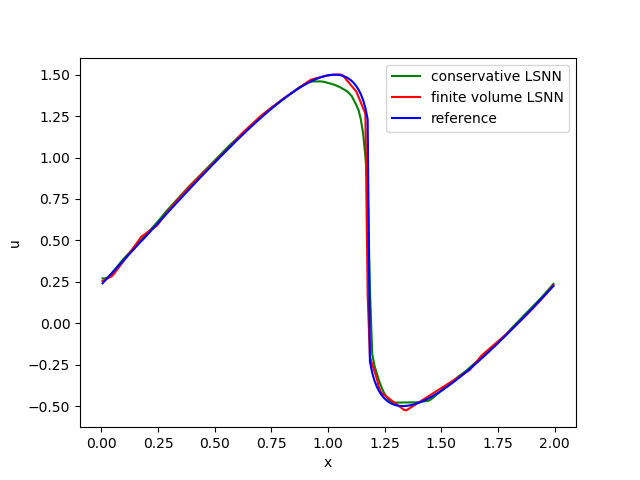}}
 \hspace{0.2in} 
\subfigure[Traces at $t=0.4$ 
]{ 
    \includegraphics[width=1.7in]{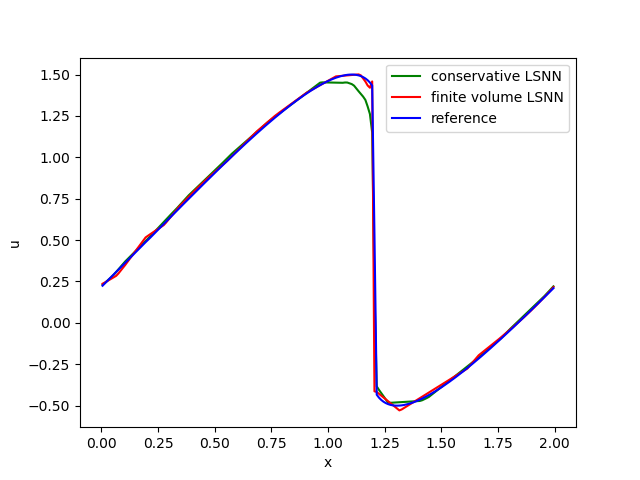}}
     \hspace{0.2in} 
\subfigure[Traces at $t=0.8$ 
]{ 
    \includegraphics[width=1.7in]{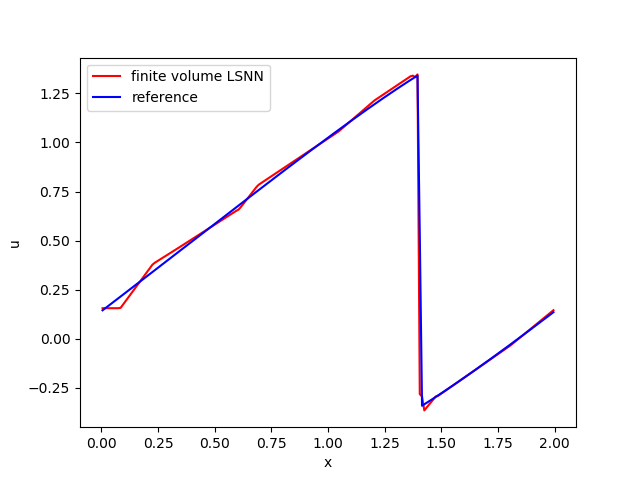}}
  \caption{Approximation results of Burgers' equation with a sinusoidal initial condition} 
  \label{sin_figure}
\end{figure}

\begin{table}[htbp]
\centering
\caption{Relative $L^2$ errors of the problem with $f(u)=\frac{1}{4}u^4$ using the composite trapezoidal rule {\em (\ref{integration})}}
\vspace{5pt}
\begin{tabular}{|c|c|c|c|}
\hline
\multirow{2}{*}{Time block} & \multicolumn{3}{c|}{Number of sub-intervals} \\ \cline{2-4} 
                            & $\hat{m}=\hat{n}=2$  & $\hat{m}=\hat{n}=4$  & $\hat{m}=\hat{n}=6$  \\ \bottomrule
\multicolumn{1}{|l|}{ $\Omega_{0,1}$} &0.067712 &0.010446 &0.004543  \\ \hline
\multicolumn{1}{|l|}{ $\Omega_{1,2}$} &0.108611 &0.008275 &0.009613  \\ \hline
\end{tabular}
\label{trapi_convergence}
\end{table}

\begin{table}[htbp]
\centering
\caption{Relative $L^2$ errors of the problem with $f(u)=\frac{1}{4}u^4$ using the composite mid-point rule {\em (\ref{integration})}}
\vspace{5pt}
\begin{tabular}{|c|c|c|c|}
\hline
\multirow{2}{*}{Time block} & \multicolumn{3}{c|}{Number of sub-intervals} \\ \cline{2-4} 
                            & $\hat{m}=\hat{n}=2$  & $\hat{m}=\hat{n}=4$  & $\hat{m}=\hat{n}=6$  \\ \bottomrule
\multicolumn{1}{|l|}{ $\Omega_{0,1}$} &0.096238 &0.007917 &0.003381  \\ \hline
\multicolumn{1}{|l|}{ $\Omega_{1,2}$} &0.159651 &0.007169 &0.005028  \\ \hline
\end{tabular}
\label{midpoint_convergence}
\end{table}

\begin{figure}[ht]
    \subfigure[Traces at $t=0.2$ 
    (trapezoidal) 
    ]{ 
    \includegraphics[width=1.8in]{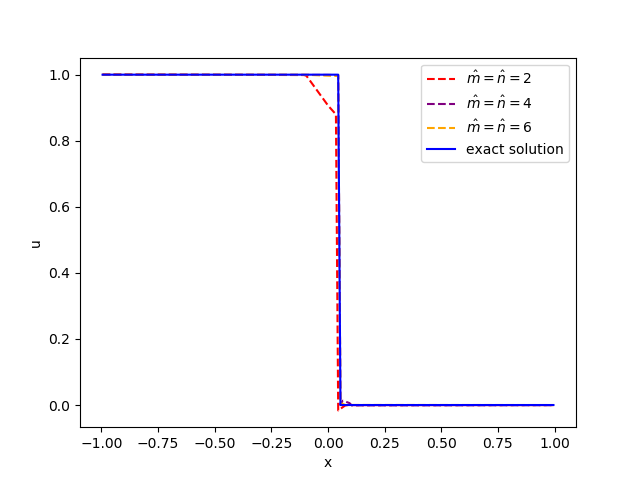}}
    \hspace{0.1in}
  \subfigure[Zoom-in plot near the discontinuous interface of sub-figure (a)]{
    \includegraphics[width=1.8in]{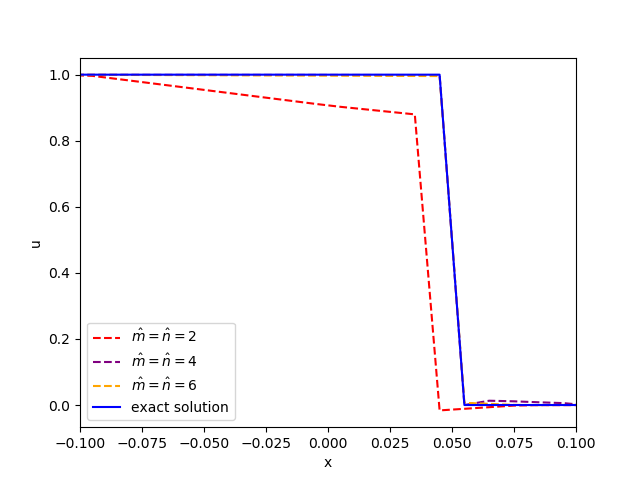}} 
     \hspace{0.1in}
  \subfigure[Traces at $t=0.4$ (trapezoidal) 
  ]{
    \includegraphics[width=1.8in]{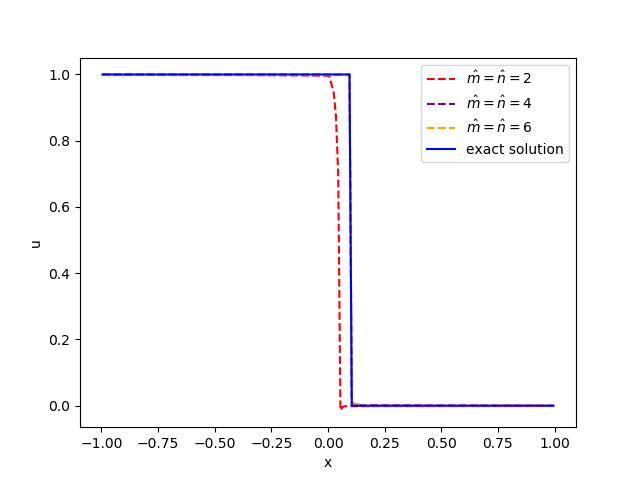}}
    \\
    \subfigure[Traces at $t=0.2$ (mid-point) ]{ 
    \includegraphics[width=1.8in]{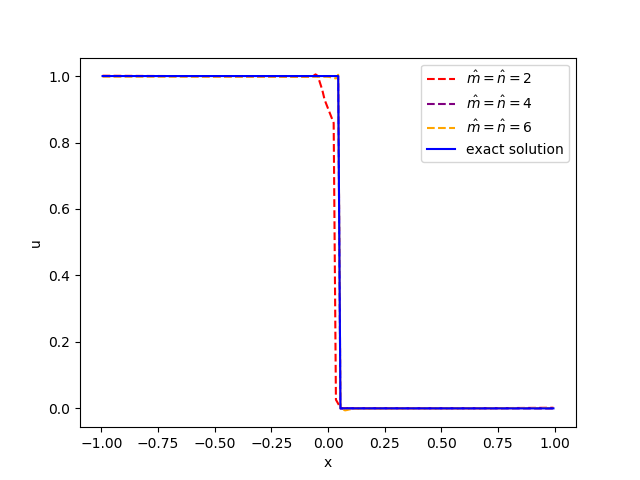}}
     \hspace{0.1in}
  \subfigure[Zoom-in plot near the discontinuous interface of sub-figure (d)]{
    \includegraphics[width=1.8in]{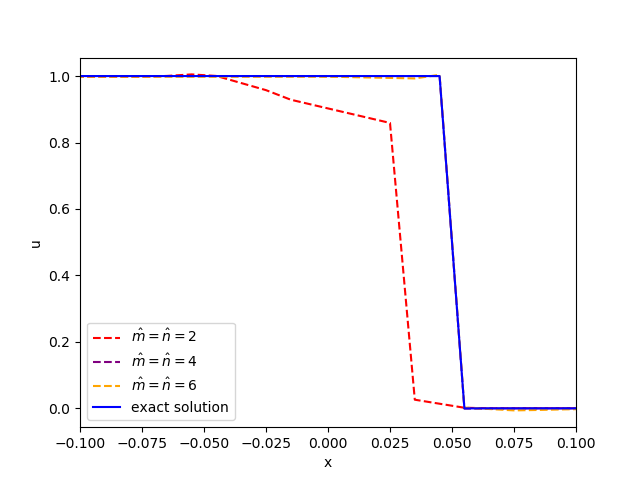}}
     \hspace{0.1in}
  \subfigure[Traces at $t=0.4$ (mid-point)]{
    \includegraphics[width=1.8in]{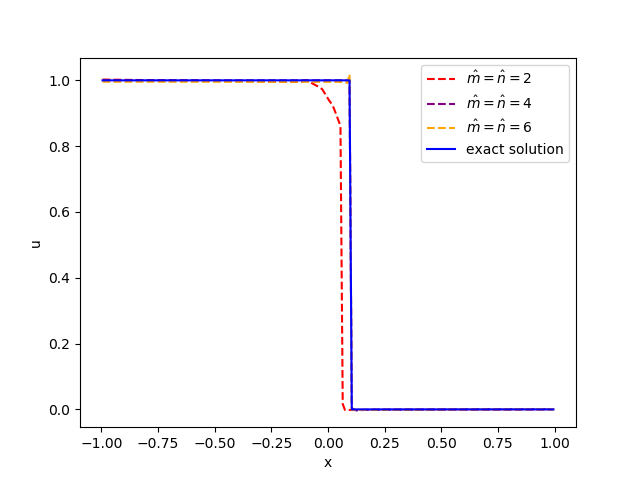}}
  \caption{Numerical results of the problem with $f(u)=\frac{1}{4}u^4$ using the composite trapezoidal and mid-point rules} 
  \label{convex_flux_fig}
\end{figure}

\subsection[Riemann problem with a convex fhttps://www.overleaf.com/project/601b08e4bc00a310266cb74clux]{Riemann problem with $f(u)=\frac14 u^4$}\label{sec5.2}
The goals of this set of numerical experiments are twofold. First, we compare the performance of the LSNN method using 
the composite trapezoidal/mid-point rule in (\ref{integration}). Second, we investigate the impact of the number of sub-intervals of the composite quadrature rule on the accuracy of the LSNN method.

The test problem is the Riemann problem with a convex flux ${\bff}(u) = (f(u), u)=(\frac14 u^4, u)$ and the initial condition $u_{_L}=1 >0=u_{_R}$. 
The computational domain is chosen to be $\Omega = (-1,1)\times (0,0.4)$. 
Relative $L^2$ errors of the LSNN method using the \divt (2-10-10-1 NN model, $n_b=2$, $\alpha = 20$, a fixed learning rate $0.003$ for the first $30000$ iterations and $0.001$ for the remaining) are reported in Tables~\ref{trapi_convergence} and \ref{midpoint_convergence}; and traces of the exact and numerical solutions are depicted in Fig.~\ref{convex_flux_fig}.


Clearly, Tables~\ref{trapi_convergence} and \ref{midpoint_convergence} indicate that the accuracy of the LSNN method depends on the number of sub-intervals ($\hat{m}$ and $\hat{n}$) for the composite quadrature rule; i.e., the larger $\hat{m}$ and $\hat{n}$ are, the more accurate the LSNN method is. Moreover, the accuracy using the composite trapezoidal and mid-point rules in the LSNN method is comparable. 



\subsection[Riemann problem with a non-convex flux]{Riemann problem with non-convex fluxes}\label{sec5.3}



\begin{table}[htbp]
\centering
\caption{Relative $L^2$ errors of Riemann problem with a non-convex flux $f(u)=\frac13 u^3$}
\vspace{5pt}
\begin{tabular}{|l|l|c|c|}
\hline
Network structure &Block & $\frac{\|u^k-u^k_{_\cT}\|_0}{\|u^k\|_0}$ \\ \hline
\multirow{4}{*}{2-64-64-64-1} & $\Omega_{0,1}$ &   0.03277\\ \cline{2-3}
 & $\Omega_{1,2}$ & 0.03370 \\ \cline{2-3}
 & $\Omega_{2,3}$ & 0.03450 \\ \cline{2-3}
 & $\Omega_{3,4}$ & 0.03578  \\ \hline
\end{tabular}
\centering
\label{nonconvex_table}
\end{table}


\begin{figure}[htbp]
\centering
    \subfigure[Traces at $t=0.1$]{ 
    \includegraphics[width=1.8in]{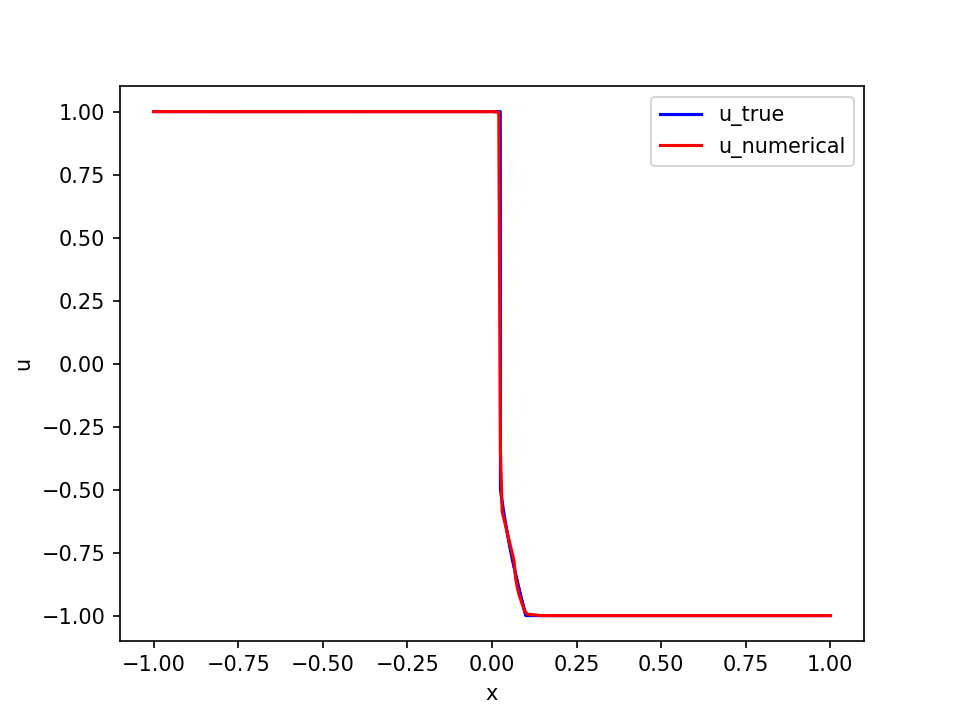}}
    \hspace{0.05in}
  \subfigure[Traces at $t=0.2$]{
    \includegraphics[width=1.8in]{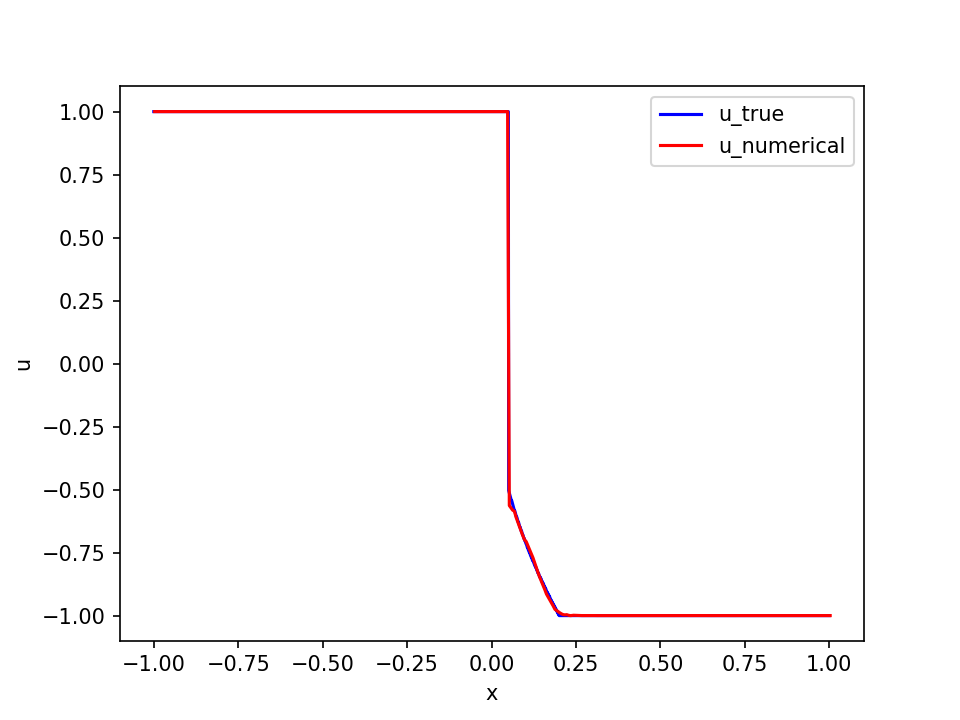}}
    \hspace{0.05in}
  \subfigure[Traces at $t=0.3$ ]{
    \includegraphics[width=1.8in]{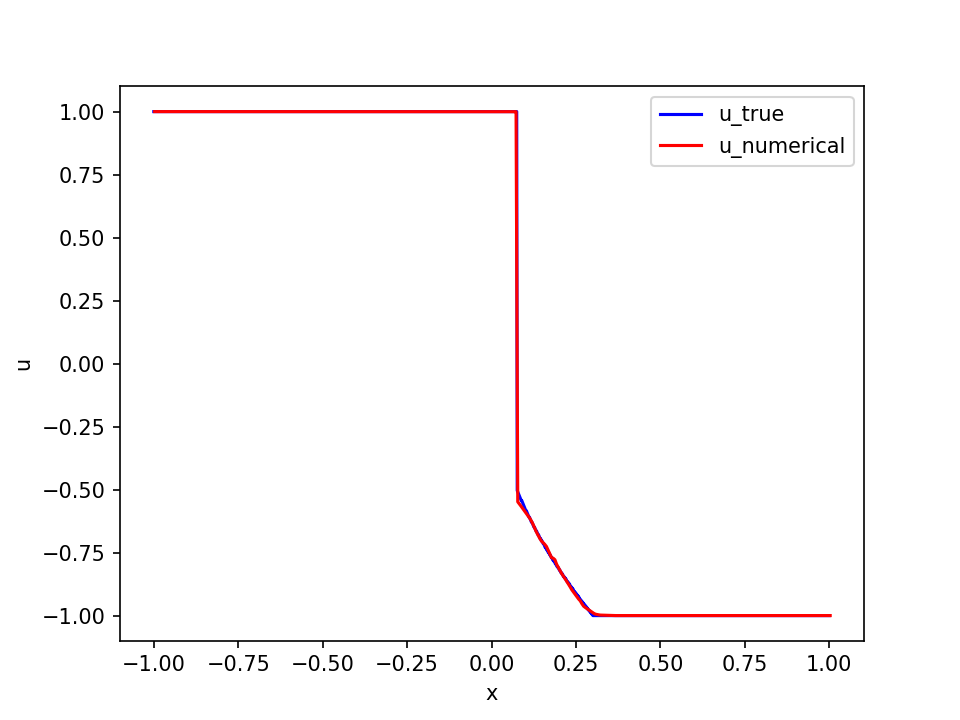}}\\
    \subfigure[Traces at $t=0.4$]{ 
    \includegraphics[width=1.8in]{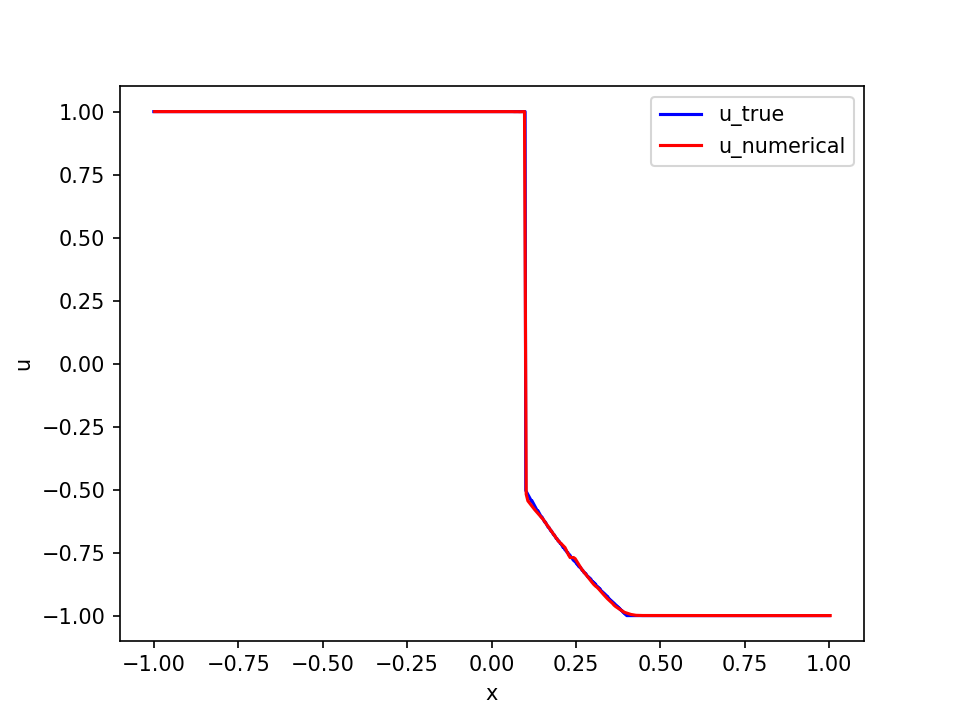}}
\hspace{0.05in}
  \subfigure[Numerical Solution $u_N$ on $\Omega$]{
    \includegraphics[width=2in]{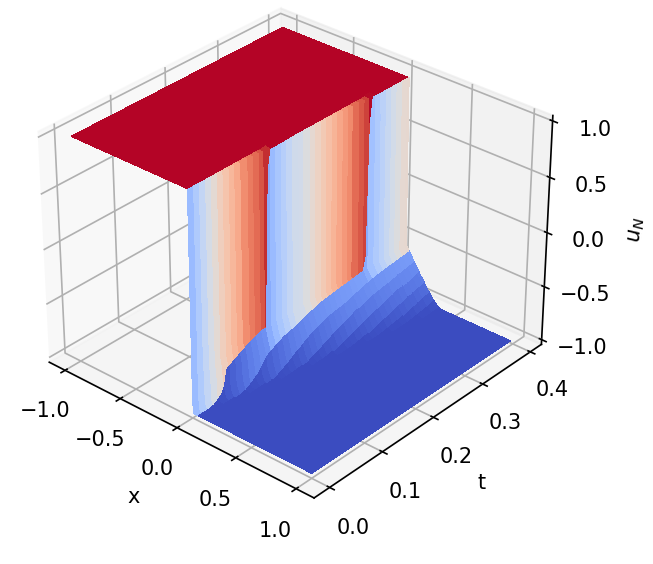}}
  \caption{Numerical results of Riemann problem with a non-convex flux $f(u)=\frac13 u^3$} 
  \label{nonconvex_figure}
\end{figure}


The test problem for a non-convex flux is a modification of the test problem in Section~\ref{sec5.2} by replacing the flux with $f(u)=\frac{1}{3}u^3$ and the initial condition with $u_L=1 >-1=u_R$.
The Riemann solution consists partly of a rarefaction wave together with a shock wave which brings a new level of challenge with a compound wave. The exact solution is obtained through Osher's formulation \cite{osher1984} which has a shock speed s=0.25 and a shock jump from $1$ to $-0.5$ when $t>0$.


The block space-time LSNN method using the \divt with $\hat{m}=\hat{n}=4$ is utilized for this problem. Four time blocks are computed on the temporal domain (0, 0.4) and a relative larger network structure (2-64-64-64-1) is tested with a smaller integration mesh size $h=\delta=0.005$ to compute the compound wave more precisely. We tune the hyper parameter $\alpha=200$, and all time blocks are computed with a total of 60000 iterations (learning rate starts with 1e-3 and decay to $20\%$ every 20000 iterations). Due to the random initial guess for the second hidden layer parameters, the experiment is replicated several times. Similar results are obtained as the best result reported in Table \ref{nonconvex_table} and Fig. \ref{nonconvex_figure} (a)-(e). These experiments demonstrate that the LSNN method can capture the compound wave for non-convex flux problems as well. 



\subsection{Two-dimensional problem}\label{sec5.4}
Consider a two-dimensional inviscid Burgers equation,  where the spatial flux vector field is
$\tilde{\bff}(u)=\frac12 (u^2, u^2)$. Given a piece-wise constant initial data 
\begin{equation}\label{2dburgers}
    u_0(x,y)=\left\{\begin{array}{r l}
    -0.2, & \mbox{if } \; x < 0.5\; \mbox{ and }\;y >0.5,\\
    -1.0, & \mbox{if }\; x > 0.5\; \mbox{ and }\; y >0.5,\\
    0.5, & \mbox{if } \; x < 0.5\; \mbox{ and } \; y < 0.5,\\
    0.8, & \mbox{if } \; x > 0.5\; \mbox{ and }\; y < 0.5,
    \end{array}\right.
\end{equation}
this problem has an exact solution given in \cite{GUERMOND2014}.

The test problem is set on computational domain $\Omega = (0,1)^2\times (0, 0.5)$ with inflow boundary conditions prescribed by using the exact solution. Our numerical result using a 4-layer LSNN (3-48-48-48-1) with 3D \divt ($\hat{m}=\hat{n}=\hat{k}=2$) are reported in Table \ref{riemann_2d_table}. The corresponding hyper parameters setting is as follows: $n_b=5$, $\alpha = 20$, the first time block is trained with $30000$ iteration where the first $10000$ iterations are using learning rate $0.003$ and the rest iterations are trained using learning rate of $0.001$; all remaining time blocks are trained with $20000$ iterations using fixed learning rate of $0.001$. Fig.\,\ref{fig_burger2d} presents the graphical results at time $t= 0.1$, $0.3$, and $0.5$. This experiment shows that the proposed LSNN method can be extended to two dimensional problems and can capture the shock and rarefaction waves in two dimensions.

\begin{table}[htbp]
\centering
\caption{Relative $L^2$ errors of Riemann problem {\em (}shock{\em )} for 2D Burgers' equation}
\vspace{5pt}
\begin{tabular}{|l|l|l|}
\hline
Network structure &Block & $\frac{\|u^k-u^k_{_\cT}\|_0}{\|u^k\|_0}$ \\ \hline
 \multirow{3}{*}{3-48-48-48-1} & $\Omega_{0,1}$  & 0.093679 \\ \cline{2-3}
 & $\Omega_{1,2}$  & 0.121375 \\ \cline{2-3}
 & $\Omega_{2,3}$  & 0.163755\\ \cline{2-3}
 & $\Omega_{3,4}$  & 0.190460\\ \cline{2-3}
 & $\Omega_{4,5}$  & 0.213013\\ \hline
 
\end{tabular}
\centering
\label{riemann_2d_table}
\end{table}

\begin{figure}[htbp]
\centering
    \subfigure[$t=0.1$]{ 
    \includegraphics[width=2in]{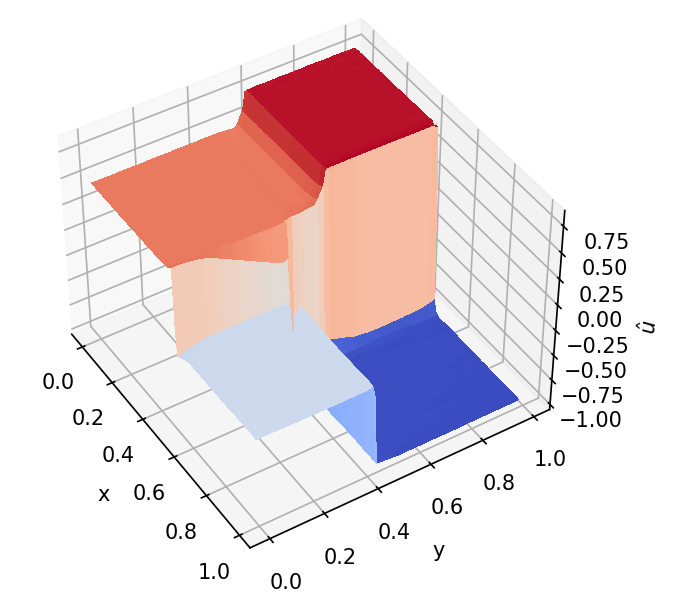}}
  \subfigure[$t=0.3$ ]{
    \includegraphics[width=1.8in]{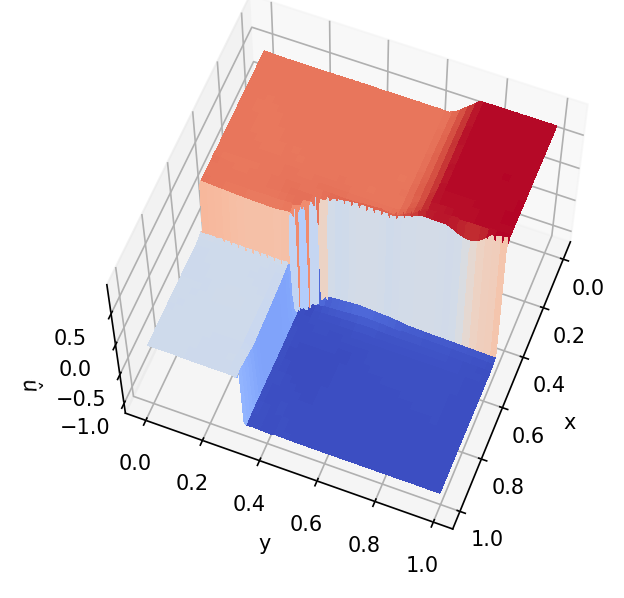}}
    \subfigure[$t=0.5$]{ 
    \includegraphics[width=1.8in]{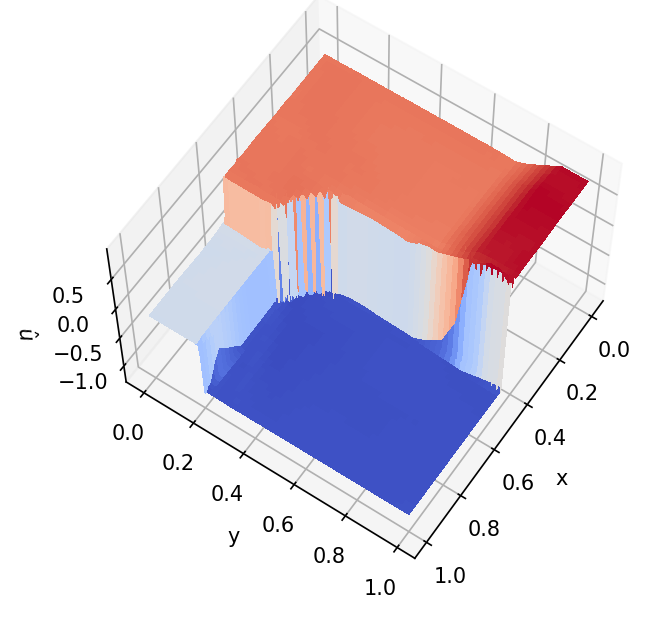}}
  \caption{Numerical results of $2D$ Burgers' equation.} 
  \label{fig_burger2d}
\end{figure}

\section{Discussion and Conclusion}\label{sec6}

The ReLU neural network provides a new class of approximating functions that is ideal for approximating discontinuous functions with unknown interface location \cite{Cai2021linear}. Making use of this unique feature of neural networks, this paper studied the least-squares ReLU neural network (LSNN) method for solving scalar nonlinear hyperbolic conservation laws. 

In the design of the LSNN method for HCLs, the numerical approximation of differential operators is a critical factor, and standard numerical or automatic differentiation along coordinate directions can often lead to a failed NN-based method. To overcome this challenge, this paper introduced a new discrete divergence operator \divt based on its physical meaning. 


Numerical results for several test problems show that the LSNN method using the \divt does overcome limitations of the LSNN method with conservative flux in \cite{Cai2021nonlinear}. Moreover, for the one dimensional test problems with fluxes $f(u)=\frac14 u^4$ and $\frac13 u^3$, the accuracy of the method may be improved greatly by using enough number of sub-intervals in the composite trapezoidal/mid-point quadrature. 

Compared to other NN-based methods like the PINN and its variants, the LSNN method introduced in this paper free of any penalization such as the entropy, total variation, and/or artificial viscosity, etc. Usually, choosing proper penalization constants can be challenging in practice and it affects the accuracy, efficiency, and stability of the method.

Even though the number of degrees of freedom for the LSNN method is several order of magnitude less than those of traditional mesh-based numerical methods, training NN is computationally intensive and complicated. For a network with more than one hidden layer, random initialization of the parameters in layers beyond the first hidden layer would cause some uncertainty in training NN (iteratively solving the resulting non-convex optimization) as observed in Section~\ref{sec5.2}. This issue plus designation of a proper architecture of NN would be addressed in a forthcoming paper using the adaptive network enhancement (ANE) method developed in \cite{LiuCai1,LiuCai2,Cai2021DeepAdaptive}.

\bibliographystyle{ieee}
\bibliography{main.bbl}

\bigskip
\bigskip
\bigskip
\bigskip
\bigskip

\section{Appendix}\label{secA}

In the appendix, we provide the proofs of Lemmas ~\ref{3.2} and \ref{3.3}. First, denote the integral and the mid-point/trapezoidal rule of a function $\varphi$ over an interval $[0,\rho]$ by
\[
I(\varphi)= \int^{\rho}_{0}\varphi(s)\,ds\quad\mbox{and}\quad 
Q(\varphi;0,\rho,1)=\left\{\begin{array}{ll}
    \rho\,\varphi(\rho/2), & \mbox{midpoint},\\[2mm]
    \dfrac{\rho}{2}\big(\varphi(0)+\varphi(\rho)\big), & \mbox{trapezoidal},
    \end{array}\right. 
\]
respectively. Let $p,\,q\in (1,\infty]$ such that $1/p+1/q=1$. It is easy to show the following error bounds:
 \begin{equation}\label{Err_Smooth}
 \big|I(\varphi)-Q(\varphi;0,\rho,1)\big|\leq \left\{\begin{array}{ll}
 C\rho^{2+ 1/q} \|\varphi^{\prime\prime}\|_{L^p(0,\rho)}, & \mbox{if } \varphi\in C^2(0,\rho),\\[2mm]
 C\rho^{1+ 1/q} \|\varphi^{\prime}\|_{L^p(0,\rho)}, & \mbox{if } \varphi\in C^1(0,\rho).
 \end{array}\right.
 \end{equation}

\medskip

\noindent \begin{myproof}{{\em Lemma}}{{\em \ref{3.2}}} We prove Lemma~\ref{3.2} only for the mid-point rule because it may be proved in a similar fashion for the trapezoidal rule. To this end, denote uniform partitions of the intervals $[x_{i},x_{i+1}]$ and $[t_{j},t_{j+1}]$ by 
\[
x_{i}=x^0_i<x_i^1<\cdots<x^{\hat{m}}_i=x_{i+1},
\mbox{and}\quad  
t_{j}=t^0_j<t_j^1<\cdots <t^{\hat{n}}_j=t_{j+1},
\]
respectively, where $x^k_i=x_i+k\hat{h}$ and $t^k_j=t_j+k\hat{\delta}$; and $\hat{h}=h/\hat{m}$ and $\hat{\delta}=\delta/\hat{n}$ are the numerical integration mesh sizes. By (\ref{Err_Smooth}), we have
\begin{eqnarray*}
    \left|\int^{t^{k+1}_j}_{t^k_j} \sigma(x_{i},x_{i+1};t)\,dt
 - \hat{\delta}\sigma(x_{i},x_{i+1};t^{k+1/2}_j)\right|
 &\leq & C\,\hat{\delta}^{2+1/q}\|\sigma_{tt}(x_{i},x_{i+1};\cdot)\|_{L^p(t^k_j,t^{k+1}_j)},\\[2mm]
 \mbox{and}\quad \left| \int^{x^{k+1}_i}_{x^k_i}u(x;t_{j},t_{j+1})\,dx - \hat{h}u(x^{k+1/2}_{i};t_{j},t_{j+1})\right|
 &\leq & C\,\hat{h}^{2+1/q}\|u_{xx}(\cdot;t_{j},t_{j+1})\|_{L^p(x^k_i,x^{k+1}_i)},
\end{eqnarray*}
which, together with (\ref{SI}), (\ref{dDO}), and the triangle and the H\"{o}lder inequalities, implies 
\begin{eqnarray*}
&& |K_{ij}|^{1/q}\left\| \div{\!\!_{_\cT}}  \bff(u)-{{\text{avg}}}_{_\cT}\div \bff (u) \right\|_{L^p(K_{ij})}=|K_{ij}|\Big| {\text{avg}}_{K_{ij}}\div \bff (u) -\div{\!\!_{_\cT}} \bff\big(u(\bm_{ij})\Big| \\[2mm]
&\leq& C \left\{h\hat{\delta}^{2+1/q} \sum^{\hat{n}-1}_{k=0}
\|\sigma_{tt}(x_{i},x_{i+1};\cdot)\|_{L^p(t^k_j,t_j^{k+1})}  +\delta\hat{h}^{2+1/q}\sum^{\hat{m}-1}_{k=0} \|u_{xx}(\cdot;t_{j},t_{j+1})\|_{L^p(x^k_i,x_i^{k+1})}\right\}\\[2mm]
&\leq& C \left\{h\hat{\delta}^{2+1/q} \hat{n}^{1/q}
\|\sigma_{tt}(x_{i},x_{i+1};\cdot)\|_{L^p(t_j,t_{j+1})}  + \delta\hat{h}^{2+1/q}\hat{m}^{1/q}\|u_{xx}(\cdot;t_{j},t_{j+1})\|_{L^p(x_i,x_{i+1})}\right\}.
\end{eqnarray*}
This completes the proof of Lemma~\ref{3.2}.
\end{myproof}

\smallskip


To prove Lemma~\ref{3.3}, we need to estimate an error bound of numerical integration for piece-wise smooth and discontinuous integrant over interval $[0,\rho]$.


 
\begin{lemma}\label{7.1} For any $0<\hat{\rho} <\rho/2$, assume that $\varphi\in C^1\big((0,\hat{\rho})\big)\cap C^1\big((\hat{\rho},\rho)\big)$ is a piece-wise $C^1$ function. Denote by $j_\varphi=|\varphi(\hat{\rho}^+)-\varphi(\hat{\rho}^-)|$ the jump of $\varphi(s)$ at $s=\hat{\rho}$. 
Then there exists a positive constant $C$ such that
\begin{eqnarray}\nonumber
\big|I(\varphi)-Q(\varphi;0,\rho,1)\big|
&\leq &
C \rho^{1+1/q}\|\varphi^{\prime}\|_{L^p\big((0,\rho)\setminus \{\hat{\rho}\}\big)} + \left\{\begin{array}{ll}
 \hat{\rho}\,j_\varphi , & \mbox{mid-point},\\[2mm]
 \left|\dfrac{\rho}{2}-\hat{\rho}\right|\,j_\varphi, & \mbox{trapezoidal}
 \end{array}\right. \\[2mm] \label{Err_discont} 
 &\leq &
C \rho^{1+1/q}\|\varphi^{\prime}\|_{L^p\big((0,\rho)\setminus \{\hat{\rho}\}\big)} + \dfrac{\rho}{2} j_\varphi.
\end{eqnarray}
\end{lemma}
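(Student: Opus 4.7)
\begin{myproof}{Plan for {\em Lemma}}{{\em \ref{7.1}}}
The plan is to reduce the discontinuous case to the smooth estimate (\ref{Err_Smooth}) by introducing an auxiliary continuous function that absorbs the jump. Let $j=\varphi(\hat{\rho}^+)-\varphi(\hat{\rho}^-)$ be the signed jump so that $|j|=j_\varphi$, and define
\[
\tilde{\varphi}(s)=
\begin{cases}
\varphi(s)+j, & s\in(0,\hat{\rho}),\\[1mm]
\varphi(s),   & s\in(\hat{\rho},\rho).
\end{cases}
\]
By construction $\tilde{\varphi}$ is continuous across $\hat{\rho}$, agrees with $\varphi$ on $(\hat{\rho},\rho)$, is $C^1$ on each subinterval, and satisfies $\tilde{\varphi}'=\varphi'$ a.e., so $\tilde{\varphi}\in W^{1,p}(0,\rho)$ with $\|\tilde{\varphi}'\|_{L^p(0,\rho)}=\|\varphi'\|_{L^p((0,\rho)\setminus\{\hat{\rho}\})}$.

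Next I would apply (\ref{Err_Smooth}) to $\tilde{\varphi}$. Strictly, the $C^1$-version of (\ref{Err_Smooth}) extends to $W^{1,p}$ in one dimension either by density or by a direct integration-by-parts argument: for the trapezoidal rule, one checks
\[
I(\tilde{\varphi})-Q(\tilde{\varphi};0,\rho,1)=-\int_0^{\rho}\bigl(s-\tfrac{\rho}{2}\bigr)\tilde{\varphi}'(s)\,ds,
\]
and then H\"older's inequality gives $|I(\tilde{\varphi})-Q(\tilde{\varphi};0,\rho,1)|\le C\rho^{1+1/q}\|\tilde{\varphi}'\|_{L^p(0,\rho)}$; a similar identity handles the midpoint rule.

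The third step is to compare the quadratures of $\varphi$ and $\tilde{\varphi}$ using the assumption $\hat{\rho}<\rho/2$. Directly,
\[
I(\tilde{\varphi})-I(\varphi)=\hat{\rho}\,j,
\]
and
\[
Q(\tilde{\varphi};0,\rho,1)-Q(\varphi;0,\rho,1)=
\begin{cases}
0, & \text{midpoint (since } \rho/2>\hat{\rho}\text{, so } \tilde{\varphi}(\rho/2)=\varphi(\rho/2)),\\[1mm]
\dfrac{\rho}{2}\,j, & \text{trapezoidal (only the endpoint }s=0\text{ lies in }(0,\hat{\rho})).
\end{cases}
\]
Subtracting yields
\[
I(\varphi)-Q(\varphi;0,\rho,1)=\bigl(I(\tilde{\varphi})-Q(\tilde{\varphi};0,\rho,1)\bigr)
+\begin{cases}
-\hat{\rho}\,j, & \text{midpoint},\\[1mm]
\bigl(\tfrac{\rho}{2}-\hat{\rho}\bigr) j, & \text{trapezoidal},
\end{cases}
\]
and the triangle inequality together with the smooth bound on $\tilde{\varphi}$ gives the first displayed inequality in (\ref{Err_discont}). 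The second (uniform) bound then follows immediately from $0<\hat{\rho}<\rho/2$, since both $\hat{\rho}$ and $|\rho/2-\hat{\rho}|$ are bounded by $\rho/2$.

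I do not expect any serious obstacle here. The only mildly delicate point is justifying the smooth quadrature estimate (\ref{Err_Smooth}) on the merely Sobolev-regular $\tilde{\varphi}$, but the Peano-kernel / integration-by-parts form above handles this without density arguments. The essential content of the lemma is really bookkeeping of how the single jump propagates through $I$ and $Q$, and the auxiliary function $\tilde{\varphi}$ isolates that bookkeeping cleanly.
\end{myproof}
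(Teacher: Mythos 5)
Your proposal is correct, and it takes a genuinely different route from the paper. The paper treats the two rules separately and works directly with the discontinuous $\varphi$: for the trapezoidal rule it subtracts the linear interpolant $\varphi_1$, derives explicit error representations on $(0,\hat{\rho})$ and $(\hat{\rho},\rho)$, and then bounds the endpoint difference $|\varphi(\rho)-\varphi(0)|$ by $(2\rho)^{1/q}\|\varphi'\|_{L^p}$ plus $j_\varphi$; for the midpoint rule it expands $\varphi(s)-\varphi(\rho/2)$ as integrals of $\varphi'$ across the jump. Your jump-absorbing auxiliary function $\tilde{\varphi}$ unifies both cases: the smooth (Sobolev) quadrature estimate handles $\tilde{\varphi}$ once, and the jump contribution is computed \emph{exactly} rather than estimated, which makes it transparent why the midpoint rule picks up $\hat{\rho}\,j_\varphi$ (the midpoint never sees the shifted piece) while the trapezoidal rule picks up $\left|\rho/2-\hat{\rho}\right| j_\varphi$ (partial cancellation between the shifted integral and the shifted endpoint). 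Your version is arguably cleaner and avoids the paper's step of re-bounding $|\varphi(\rho)-\varphi(0)|$, at the mild cost of having to justify the quadrature error bound for a $W^{1,p}$ integrand with a kink; your Peano-kernel identity $I(\tilde{\varphi})-Q(\tilde{\varphi};0,\rho,1)=-\int_0^\rho\bigl(s-\tfrac{\rho}{2}\bigr)\tilde{\varphi}'(s)\,ds$ (and its midpoint analogue) does that correctly since $\tilde{\varphi}$ is absolutely continuous. Both arguments yield the same final bound, including the uniform $\tfrac{\rho}{2}j_\varphi$ form via $0<\hat{\rho}<\rho/2$.
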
 
 
\begin{proof}
Denote the linear interpolant of $\varphi$ on the interval $[0,\rho]$ by $\varphi_1(s)=\varphi(0)\,\dfrac{\rho-s}{\rho} + \varphi(\rho)\, \dfrac{s}{\rho}$.
For any $s\in (0,\hat{\rho})$, by the fact that $\varphi(0)-\varphi_1(0)=0$, a standard argument on the error bound of interpolant yields that there exists a $\xi_-\in (0,\hat{\rho})$ such that 
\[ 
\varphi(s)-\varphi_1(s)= \varphi^\prime(\xi_-) s -\dfrac{s}{\rho}(\varphi(\rho)-\varphi(0)), 
\] 
which implies \[ \int^{\hat{\rho}}_{0}(\varphi(s)-\varphi_1(s))\,ds = \int^{\hat{\rho}}_{0}\varphi^\prime(\xi_-)s\,ds - \dfrac{\hat{\rho}^2}{2\rho}\left(\varphi(\rho)-\varphi(0)\right). \] In a similar fashion, there exists a $\xi_-\in (\hat{\rho},\rho)$ such that \[ \int^{\rho}_{\hat{\rho}}(\varphi(s)-\varphi_1(s))\,ds = \int^{\rho}_{\hat{\rho}}\varphi^\prime(\xi_+)(s-\rho)\,ds + \dfrac{(\rho-\hat{\rho})^2}{2\rho}\left(\varphi(\rho)-\varphi(0)\right). \] 
Combining the above inequalities and using the triangle and the H\"{o}lder inequalities give 
\begin{eqnarray*} \nonumber
\big|I(\varphi)-Q_t(\varphi)\big|&=&\left| \int^{\hat{\rho}}_{0}\!\!\varphi^\prime(\xi_-)sds + \int^{\rho}_{\hat{\rho}}\!\!\varphi^\prime(\xi_+)(s-\rho)ds + \dfrac{\rho-2\hat{\rho}}{2}\left(\varphi(\rho)-\varphi(0)\right)\right|\\[2mm]  \label{3.5a}
&\leq & 
\dfrac{1}{(1+q)^{1/q}}\rho^{1+1/q}\left(\|\varphi^{\prime}\|_{L^p(0,\hat{\rho})}+ \|\varphi^{\prime}\|_{L^p(\hat{\rho},\rho)}\right) +\left|\dfrac{\rho}{2}-\hat{\rho}\right|\,\left|\varphi(\rho)-\varphi(0)\right| \\[2mm]
 &\leq & \dfrac{2^{1/q}}{(1+q)^{1/q}}\rho^{1+1/q}\|\varphi^{\prime}\|_{L^p\big((0,\rho)\setminus \{\hat{\rho}\}\big)} +\left|\dfrac{\rho}{2}-\hat{\rho}\right|\,\left|\varphi(\rho)-\varphi(0)\right|.
\end{eqnarray*} 
It follows from the triangle and the H\"{o}lder inequalities that 
\begin{eqnarray*} 
 \left|\varphi(\rho)-\varphi(0)\right| &\leq & \left|\int_{\hat{\rho}}^\rho\varphi^\prime(s)\,ds\right| + \left|\int_0^{\hat{\rho}}\varphi^\prime(s)\,ds\right| +j_\varphi \\[2mm] 
&\leq &\rho^{1/q}\left(\|\varphi^{\prime}\|_{L^p(0,\hat{\rho})}+ \|\varphi^{\prime}\|_{L^p(\hat{\rho},\rho)}\right) + j_\varphi
\leq \left(2\rho\right)^{1/q} \|\varphi^{\prime}\|_{L^p((0,\rho)\setminus \{\hat{\rho}\})} + j_\varphi.
 \end{eqnarray*} 
Now, the above two inequalities and the fact that $\left|\dfrac{\rho}{2}-\hat{\rho}\right|\leq \dfrac{\rho}{2}$ imply (\ref{Err_discont}) for the trapezoidal rule. 

To prove the validity of (\ref{Err_discont}) for the mid-point rule, note that for any $s\in (0,\hat{\rho})$ we have
\begin{eqnarray*}
\varphi(s)-\varphi(\rho/2)&=& \int^s_{\hat{\rho}}\varphi^\prime(s)\,ds +\int_{\rho/2}^{\hat{\rho}}\varphi^\prime(s)\,ds +  \varphi(\hat{\rho}^-)-\varphi(\hat{\rho}^+)\\[2mm]
&\leq & (\hat{\rho}-s)^{1/q}\|\varphi^{\prime}\|_{L^p(s,\hat{\rho})} + (\rho/2-\hat{\rho})^{1/q}\|\varphi^{\prime}\|_{L^p(\hat{\rho},\rho/2)} +  \varphi(\hat{\rho}^-)-\varphi(\hat{\rho}^+),  
\end{eqnarray*}
which, together with the triangle inequality, implies
\[
\left|\int^{\hat{\rho}}_0\big(\varphi(s)-\varphi(\rho/2)\big)\,ds\right| \leq \left(\dfrac{\rho}{2}\right)^{1+1/q} \left(\|\varphi^\prime\|_{L^p(0,\hat{\rho})} + \|\varphi^\prime\|_{L^p(\hat{\rho},\rho/2)}\right) +\hat{\rho} j_\varphi .
\]
Similarly, we have
\[
\left|\int_{\hat{\rho}}^\rho\big(\varphi(s)-\varphi(\rho/2)\big)\,ds\right| \leq \dfrac{2q}{1+q}\left(\dfrac{\rho}{2}\right)^{1+1/q} \|\varphi^\prime\|_{L^p(\hat{\rho},\rho)}.
\]
Now, (\ref{Err_discont}) for the mid-point rule follows from the triangle inequality and the above two inequalities. This completes the proof of the lemma.
\end{proof}

Now, we are ready to prove the validity of Lemma~\ref{3.3}.

\smallskip
 
\begin{myproof}{\em Lemma}{\em \ref{3.3}} 
By the assumption, the discontinuous interface $\Gamma_{ij}$ intercepts two horizontal edges at $(\hat{x}_i^l,t_l)$ for $l=j,j+1$. Without loss of generality, assume that  $\hat{x}_i^{j}\in \left(x_i^{k_j},x_i^{k_j+1}\right)$ and $\hat{x}_i^{j+1}\in\left(x_i^{k_{j+1}},x_i^{k_{j+1}+1}\right)$ for some $k_j$ and $k_{j+1}$ in $\{0,1, \cdots, \hat{m}\}$. Let $\hat{I}_{ij}=\left(x_i^{k_j},x_i^{k_j+1}\right)\cup \left(x_i^{k_j},x_i^{k_j+1}\right)$. The same proof of Lemma~\ref{3.2} leads to
\begin{eqnarray*}
  && \left\| \div{\!\!_{_\cT}}  \bff(u)-{{\text{avg}}}_{_\cT}\div \bff (u) \right\|_{L^p(K_{ij})} \\[2mm]
  &\leq & C \left\{\dfrac{h^{1/p}\delta^2}{\hat{n}^2}
\|\sigma_{tt}(x_{i},x_{i+1};\cdot)\|_{L^p(t_j,t_{j+1})}  + \dfrac{h^{2}\delta^{1/p}}{\hat{m}^2}\|u_{xx}(\cdot;t_{j},t_{j+1})\|_{L^p\big((x_i,x_{i+1})\setminus \hat{I}_{ij}\big)}\right\} \\[2mm]
&& + \dfrac{\delta}{(h\delta)^{1/q}} \sum_{l=j}^{j+1} \left|
\int^{x^{k_l+1}_i}_{x^{k_l}_i}u(x;t_{j},t_{j+1})\,dx - \hat{h}u(x^{k_l+\frac12}_{i};t_{j},t_{j+1}) \right|,
\end{eqnarray*}
which, together with Lemma~\ref{7.1}, implies
\begin{eqnarray*}
  && \left\| \div{\!\!_{_\cT}}  \bff(u)-{{\text{avg}}}_{_\cT}\div \bff (u) \right\|_{L^p(K_{ij})} \\[2mm]
&\leq & C\left(\dfrac{h^{1/p}\delta^{2}}{\hat{n}^2}
    + \dfrac{h^{2}\delta^{1/p}}{\hat{m}^2}\right) + \dfrac{\hat{h}\delta}{(h\delta)^{1/q}} \sum_{l=j}^{j+1} \left\{C\hat{h}^{1/q}\|u_{x}(\cdot;t_{j},t_{j+1})\|_{L^p\big((x_i,x_{i+1})\setminus \{\hat{x}_i^l\}\big)} +  \jump{u(\hat{x}_i^l,t_l)}\right\}.
\end{eqnarray*}
Now, (\ref{div-est2_0}) follows from $\hat{h}=h/\hat{m}$. This completes the proof of Lemma~\ref{3.3}.
\end{myproof}

\end{document}